\def\al{\alpha}
\def\nn{\nonumber}
\def\nn{\nonumber}
\def\Z{{\mathbb Z}}
\def\C{{\mathbb C}}
\def\R{\mathbb R}
\DeclareMathOperator{\diag}{diag}
\DeclareMathOperator{\Ad}{Ad}
\DeclareMathOperator{\Hom}{Hom}
 \numberwithin{equation}{section}
  \newtheorem{corollary}[equation]{Corollary}
    \newtheorem{lemma}[equation]{\bf Lemma}
    \newtheorem{conjecture}[equation]{\bf Conjecture}
        \newtheorem{examplent}[equation]{\bf Example}
    \newtheorem{propositionnt}[equation]{\bf Proposition}
        \newtheorem{remark}[equation]{\bf Remark}
\g@addto@macro\th@remark{\thm@headpunct{}}
  \theoremstyle{remark}
      \newtheorem{proposition}[equation]{\bf Proposition}
    \newtheorem{example}[equation]{\bf Example}
\newtheorem{definition}[equation]{\bf Definition}
       \newtheorem{theorem}[equation]{\bf Theorem}
\newcommand{\ts}{\textstyle}
\DeclareMathOperator{\GL}{GL}
\DeclareMathOperator{\SL}{SL}
\DeclareMathOperator{\re}{Re}
\title{\boldmath The functional equations of Langlands Eisenstein series for ${\rm \bf SL}(n,\mathbb Z)$}
\author{Dorian Goldfeld \and Eric Stade \and Michael Woodbury}
\address{Department of Mathematics\\Columbia University\\ 2990 Broadway \\ New York, NY 10027, USA}
\email{goldfeld@columbia.edu} 
\address{Department of Mathematics\\
University of Colorado Boulder\\
Boulder, Colorado 80309, USA}
\email{stade@colorado.edu}
\address{Department of Mathematics \\Rutgers, The State University of New Jersey\\
110 Frelinghuysen Rd\\
Piscataway, NJ 08854-8019, USA}
\email{michael.woodbury@rutgers.edu} 
\thanks{Dorian Goldfeld is partially supported by Simons Collaboration Grant Number 567168.}
\begin{document}

\begin{abstract} This paper presents a very simple explicit description of Langlands Eisenstein series for $\SL(n,\mathbb Z)$. The functional equations of these Eisenstein series are heuristically derived from the functional equations of certain divisor sums and certain Whittaker functions that appear in the Fourier coefficients of the Eisenstein series.  We conjecture that the functional equations are unique up to a real affine transformation of the $s$ variables defining the Eisenstein series and prove the uniqueness conjecture in certain cases.\end{abstract}

\maketitle

\tableofcontents

\centerline{\it Dedicated to the memory of Chen Jingrun
}
\section{ \large \bf Introduction} 

\subsection{Early history of the analytic theory of Eisenstein series}

As remarked by Moeglin and Waldspurger \cite{s11}, the analytic theory of Eisenstein series really began with the work of
 Maass \cite{s10} who formally defined the series
 \begin{equation}\label{Eis2}E(z,s) := \frac12\underset{c,d\in\mathbb Z}{\sum_{(c,d)=1}} \,\frac{y^s}{|cz+d|^{2s}}\end{equation}
 which converges absolutely for $s\in\mathbb C$ and $\Re(s)>1$ for all $z = x+iy$  in the upper half-plane, i.e., $x\in\mathbb R, y>0.$ Let $\zeta(s)$ denote the Riemann zeta function which satisfies the functional equation
 \begin{equation}\label{zetaFE}
 \zeta^*(s) := \pi^{-\frac{s}{2}} \Gamma\left(\frac{s}{2}\right)\zeta(s) = \zeta^*(1-s).
 \end{equation}
Maass obtained the meromorphic continuation and functional equation of the completed Eisenstein series
$$E^*(z,s) := \zeta^*(2s) E(z,s) = E^*(z,1-s)$$
from the Fourier expansion of $E(z,s)$ together with the functional equation (\ref{zetaFE}). This approach was generalized by Roelcke \cite{s12} to discrete groups commensurable with $\SL(2,\mathbb Z)$ and completed by Selberg \cite{s13}, \cite{MR0176097} for all Eisenstein series on $\GL(2,\mathbb R).$ In his talk at the International Congress in Stockholm 1962, Selberg \cite{MR0176097} presented a new proof of the functional equation of rank one Eisenstein series  which did not make use of the Fourier expansion of   these series
except for the constant term. This approach was generalized by Langlands, (see \cite{s8}, \cite{s9}, \cite{s11}) who defined more general Eisenstein series in the higher rank case and extended Selberg's proof of the meromorphic continuation and functional equations of Eisenstein series. The basic principle in Selberg's proof is to show the analytic continuation of the Eisenstein series and its constant term simultaneously by using the fact that the resolvent of an operator has analytic continuation to the complement of its spectrum. 
In 1967 Selberg found another proof of the functional equation of Eisenstein series which was not published but shown to Dennis Hejhal, Paul Cohen, and Peter Sarnak which Selberg suggested would also work in the case of higher rank, but it took at least two decades before Selberg's claim was vindicated. In the 1980's Joseph Bernstein simplified Selberg's second proof. More recently Bernstein and Lapid \cite{s1} found a {\it ``soft''} uniform proof of the meromorphic continuation and functional equations of Eisenstein series induced from a general automorphic form (not necessarily cuspidal or in the discrete spectrum).  

\subsection{Elementary introduction to Langlands Eisenstein series for \boldmath $\SL(n,\Z)$} We now present  a very elementary explanation of the notation for Langlands Eisenstein series. For a formal definition see \S \ref{EisSeriesDef}.

 Let $n\ge2$.  The Langlands Eisenstein series for $\SL(n,\Z)$ depends on an integer partition $$n=n_1+n_2+\cdots+ n_r, \qquad \big( n_1,n_2,\ldots, n_r\in \Z_{>0} \;\, \text{\rm and}\;\,  2\le r\le n \big),$$  which we denote by $\mathcal P=\mathcal P_{n_1,n_2,\ldots n_r}$. In addition, the Langlands Eisenstein series for $\SL(n,\Z)$  also depends on a tensor product of automorphic functions 
 $$\Phi:= \phi_1\otimes \phi_2\otimes \cdots\otimes \phi_r,$$
  where each $\phi_j:\GL(n_j,\mathbb R)\mapsto \C$ is a smooth function invariant under left multiplication by the discrete subgroup $\SL(n_j,\Z)$ and the center of $\GL(n_j,\mathbb R)$, and right invariant by ${\rm O}(n_j,\mathbb R).$  We denote the Langlands Eisenstein series associated to $\mathcal P$ and $\Phi$ by $E_{ \mathcal P,\Phi}(g,s)$, where $s=(s_1,s_2,\ldots, s_r)\in \C^r$ satisfies $\sum\limits_{i=1}^r n_i s_i=0$.

\subsection{Motivation and Main Theorems of this paper}
After seeing the paper \cite{s3} (where a template method for computing the first coefficient of Langlands Eisenstein series for Chevalley groups is presented), Peter Sarnak raised the question if it might be possible to prove the meromorphic continuation and functional equation of Langlands Eisenstein series for $\SL(n,\mathbb Z)$ by the original method of Maass which just uses the explicit form of each of the Fourier coefficients of Eisenstein series. 

 The  aim of this paper is to show that the non-constant Fourier coefficients of Langlands Eisenstein series for $\SL(n,\Z)$ all satisfy the same functional equations and are entire functions of the complex variables  defining the Eisenstein series. It was proved  in \cite{s6} that the first coefficient of $E_{\mathcal P,\Phi}$ is given as a reciprocal of a certain product of completed Rankin-Selberg L-functions. If we multiply $E_{\mathcal P,\Phi}$ by this product of completed Rankin-Selberg L-functions we obtain the normalized Eisenstein series $E^*_{\mathcal P,\Phi}$   defined in our main Theorem \ref{maintheorem}. We will confirm by direct computation of the Fourier coefficients (see    Theorem \ref{EisFunctionalEquations}) that the functional equation  of  $E^*_{\mathcal P,\Phi}(g,s)$ proved by Langlands (see \cite{s9}, \cite{s11}) is  given by
\begin{equation} \label{FEforE*}
\boxed{E^*_{\mathcal P,\Phi}(g,s)=E^*_{\sigma\mathcal P,\sigma\Phi}(g,\sigma s)}
\end{equation}
where the permutation $\sigma\in S_r$ satisfies
\begin{align*} \sigma \mathcal  P= \mathcal P_{n_{\sigma(1)},n_{\sigma(2)},\ldots,n_{\sigma(r)}},\quad
  \sigma \Phi= \phi_{\sigma(1)}\otimes\phi_{\sigma(2)}\otimes \cdots \phi_{\sigma(r)},\quad
\sigma s = \big(s_{\sigma(1)},s_{\sigma(2)},\ldots,s_{\sigma(r)}\big).\end{align*}
 For some simple concrete examples of functional equations of Langlands Eisenstein series see \S\ref{SmallRankExamples}.
 
 \vskip 8pt
 We also conjecture that the functional equations \eqref{FEforE*} are unique
 in the sense that if there exists a real valued affine  transformation $\mu(s)$ of the variables $s=(s_1,s_2,\ldots,s_k)$ then
  $\mu$ has to be a permutation in $S_r$. See \S \ref{UniquenessSection} where this conjecture is stated and proved in the case where $\Phi = \phi_1\otimes \phi_2\otimes\cdots\otimes\phi_r$ and $\phi_1=\phi_2=\cdots =\phi_r.$
 As far as we know the uniqueness of functional equations of Langlands Eisenstein series has not been investigated before.
 \vskip 8pt
  If we knew that every Fourier-Whittaker coefficient of the Langlands Eisenstein series $E^*_{ \mathcal P,\Phi}(g,s)$ had meromorphic continuation in all its complex variables and satisfied the same functional equations, then this would give a new significantly  simpler proof of the meromorphic continuation and functional equations of all Langlands Eisenstein series for $\SL(n,\Z)$.  We conjecture that it's enough to prove this for the constant Fourier-Whittaker coefficient and the $$(1,1,\ldots,1,\underbrace{p}_{j^{\rm th} {\tiny\rm\ entry}},1,\ldots,1)$$coefficients, for every prime $p$ and all $j$ with $1\le j\le n-1$.

\section{\large   \bf Basic notation}

\begin{definition} {\bf (Generalized upper half plane $\mathfrak h^n$).}
  We define the \emph{generalized upper half plane} as 
 \[ \mathfrak h^n := \GL(n,\mathbb R)/\left(\text{O}(n,\R)\cdot\R^\times   \right). \]
 By  the Iwasawa decomposition of $\GL(n)$ (see \cite[\S 1.2]{s5}) every element of $\mathfrak h^n$ has a coset representative of the form $g=x y$ where
\begin{equation}\label{eq:ymatrix-def}
x = \left(\begin{smallmatrix} 
  1 & x_{1,2} & x_{1,3}& \cdots  & & x_{1,n}\\
  & 1& x_{2,3} &\cdots & & x_{2,n}\\
  & &\hskip 2pt \ddots & & & \vdots\\
  & && & 1& x_{n-1,n}\\
  & & & & &1\end{smallmatrix}\right) \in {U}_n(\mathbb R), \qquad\;
 y =
    \left(\begin{smallmatrix} y_1y_2\cdots
    y_{n-1} & & & \\
    & \hskip -30pt y_1y_2\cdots y_{n-2} & & \\
    & \ddots &  & \\
    & & \hskip -5pt y_1 &\\
    & & &  1\end{smallmatrix}\right),
\end{equation}
with $y_i > 0$ for each $1 \le i \le n-1$.  The group $\GL(n,\R)$ acts as a group of transformations on $\mathfrak h^n$ by left multiplication.
\end{definition}

\begin{remark}\rm  In the case $n=2$, the above definition gives us the classical upper half-plane$$\mathfrak h^2=\biggl\lbrace \biggl(\begin{matrix}   1 &x\\
0   & 1\end{matrix}\biggr)\biggl(\begin{matrix}  y &0\\
0   & 1\end{matrix}\biggr)\bigg \vert\  x\in \R,\ y>0\biggr\rbrace.$$\end{remark}

Note that the summand $y^s/|cz+d|^{2s}$ figuring in the Eisenstein series \eqref{Eis2} is of the form  $f( \gamma z)$, where $f$ is the power function on $\mathfrak h^2$ given by $f(x+iy)=y^s$.  It is natural to define an analogous power function on $\mathfrak h^n$.

Consider a partition $n=n_1+n_2+\cdots+ n_r$, where $n_i\in \Z_{>0}$  for $1\le i\le r$.  We can then define a power function on matrices   
$\mathfrak m=  \left(\begin{smallmatrix} \mathfrak m_1 & * & \cdots &*\\
 & \mathfrak m_2 & \cdots & *\\
 &  & \ddots & \vdots \\
 & & & \mathfrak m_r\end{smallmatrix}\right) \in \GL(n,\mathbb R)$ and $r$-tuples $s=(s_1,s_2,\ldots, s_r)\in \C^r$ satisfying $\sum\limits_{i=1}^r n_i s_i=0$ as follows:
\begin{equation}\big |\mathfrak m\big |^s_{\mathcal P} \; := \; \prod_{i=1}^r\big |\det \mathfrak m_i\big|^{s_i},\label{eq:powerfunction}\end{equation}where $\mathcal P = \mathcal P_{n_1,n_2,\ldots n_r}$ denotes the partition.

\begin{remark}\rm The condition $\sum\limits_{i=1}^r n_i s_i=0$ assures that the above power function is invariant under multiplication by elements of the center of $\GL(n,\R)$.\end{remark}

 \begin{definition} {\bf (Langlands parameter).} A {\it Langlands parameter} for $\GL(n)$ is an $n$-tuple $\al=(\al_1,\al_2,\ldots,\al_n)\in \C^n$ satisfying $\sum\limits_{i=1}^n \al_i=0$.\end{definition}

   \begin{definition} {\bf (Langlands parameter for an automorphic form).} \label{AutLang} \rm Let $F\colon\mathfrak h^n\to \C$ be a smooth function invariant under the left action of $\SL(n,\Z)$ on $\mathfrak h^n$, and suppose that $F$ is an eigenfunction of all the $\GL(n,\R)$-invariant differential operators on $\mathfrak h^n$ (see \cite[\S 2.4]{s5}). We say that \emph{$F$ has Langlands parameter $\al=(\al_1,\al_2,\ldots \al_n)$} if $F$ has the same eigenvalues as the power function $$\big | * \big|^{\al+\rho_{_{\mathcal B}}}_{ \mathcal B},$$where $\mathcal B$ denotes the partition $n=1+1+\cdots+1$, and\begin{equation}\rho_{_{\mathcal B}}=\biggl(\frac{n-1}{2},\frac{n-3}{2},\ldots, \frac{1-n}{2}\biggr).\label{PsubB}\end{equation} \end{definition}

 \begin{remark}\rm The notation $\mathcal B$ is used to connote the Borel parabolic subgroup.  There is a one-to-one correspondence between partitions and parabolic subgroups, up to conjugacy, cf. Definition \ref{GLnParabolic}.\end{remark}
 
 \begin{examplent}\rm A Maass form for $\SL(2,\Z)$ with Laplace eigenvalue $\tfrac{1}{4}-\beta^2$ has Langlands parameter $(\al_1,\al_2)=(\beta,-\beta)$.\end{examplent}

 \section{ \large \bf Examples of Langlands Eisenstein series of small rank}
 \label{SmallRankExamples}
 
 The Eisenstein series \eqref{Eis2} is constructed by summing the translates 
 $({\rm Im}\,\gamma z)^s$ over all $\gamma\in \biggl(\begin{matrix}   1 &*\\
0   & 1\end{matrix}\biggr)\backslash\SL(2,\Z)$.    That Eisenstein series is essentially the same as the following example.

\begin{example}{\bf (\boldmath $\SL(2,\Z)$ Eisenstein series).} \label{sl2Borel}\rm Here we have the partition $2=1+1$.  

Let $s=(s_1,s_2)\in \C^2$ with $s_1+s_2=0$.  Then the power function \eqref{eq:powerfunction} is
$$\bigg|\bigg(\begin{matrix}   1 &x\\
0   & 1\end{matrix}\bigg)\bigg(\begin{matrix}  y &0\\
0   & 1\end{matrix}\bigg) \bigg|^s_{\mathcal P_{1,1}}=y^{s_1}$$and we can define the Eisenstein series
$$E_{\mathcal P_{1,1}} (g,s)= \hskip 6pt \frac12 \cdot \hskip -6pt \sum_{\gamma\in (\begin{smallmatrix}   1 &*\\
0   & 1\end{smallmatrix})\backslash \SL(2,\Z)} \big|\gamma g\big|^{s+(1/2,-1/2)}_{\mathcal P_{1,1}}
= \hskip 6pt \frac12 \cdot \sum_{\gamma\in (\begin{smallmatrix}   1 &*\\
0   & 1\end{smallmatrix})\backslash \SL(2,\Z)} ({\rm Im}\,\gamma z)^{s_1+1/2},$$where $g$ has Iwasawa form $\big(\begin{smallmatrix}   1 &x\\
0   & 1\end{smallmatrix}\big)\big(\begin{smallmatrix}  y &0\\
0   & 1\end{smallmatrix}\big)$ and $z=x+iy$.

\begin{remark}\rm Shifting $s$ by $\tfrac{1}{2}$ in the power function simplifies the functional equation, whose derivation we now explain.\end{remark}

The Fourier expansion
   \begin{align} \label{FourierExpEisensteinGL(2)}
 &E_{\mathcal P_{1,1}}(g, s ) \\&=  { {y^{ s_1 +\frac12} + \phi( s_1 +\ts{\frac12}) y^{\frac12- s_1 }}} \; + {{\frac{1}{ \zeta^*(2 s_1 +1)}}}
\sum_{m\ne0} \;\, {{ \sigma_{2s_1 }(m)  |m|^{-s_1}}}  \sqrt{y}\,K_{s_1}(2\pi \lvert m\rvert y) e^{2\pi imx}, \nonumber
\end{align}
where
{\small$$\zeta^*(s)=\pi^{-s/2}\,\Gamma(\tfrac{s}{2})\,\zeta(s),\quad \phi(s) = \frac{\zeta^*(2s-1)}{\zeta^*(2s)}, \quad\sigma_s(n) = \sum_{\substack{d|n\\ d>0}} d^s,  $$}and $K$ denotes the classical $K$-Bessel function, is well-known.

To see the functional equation of $E_{\mathcal P_{1,1}}(g,s)$ from the Fourier expansion, it is necessary to define$$E_{\mathcal P_{1,1}}^*(g,s)= \zeta^*(2 s +1) E_{\mathcal P_{1,1}}(g,s);$$that is, we are clearing the denominator.\end{example}

The main object of this paper is to show that the functional equations of all Langlands Eisenstein series for $\SL(n,\Z)$  can be easily seen by observing the Fourier coefficients in the Fourier expansion of the Eisenstein series.

In the case of $\SL(2,\Z)$, $$\int_0^1 E_{\mathcal P_{1,1}}^*\biggl(\Bigl(\begin{matrix}   1 &u\\
0   & 1\end{matrix}\Bigr)g,s\biggr)\, e^{2\pi i m u}\,du= {{ {{ \sigma_{2s_1 }(m)  |m|^{-s_1}}}  }}
 \sqrt{y}\,K_{s_1}(2\pi \lvert m\rvert y)  $$for $m\ne0$.  This immediately implies that, if $E_{\mathcal P_{1,1}}(g,s)$ satisfies a functional equation in $s$,  then each of its Fourier coefficients must also satisfy the same functional equation.   Since  the $m^{\rm th}$ Fourier coefficient, for $m\ne0$, is easily seen to be invariant under $s\to -s$, and  the constant term  $$ \zeta^*(2 s_1 +1)\Big({ {y^{ s_1 +\frac12} + \phi( s_1 +\ts{\frac12}) y^{\frac12- s_1 }}}\Big)$$ is readily seen (by the functional equation of the Riemann zeta function) to satisfy the same functional equation, it follows that the  Eisenstein series satisfies  $E_{\mathcal P_{1,1}}^*(g,s)=E_{\mathcal P_{1,1}}^*(g,-s)$.

\begin{example}{\bf (\boldmath The Eisenstein series $E_{\mathcal P_{1,1,1}}(g,s)$ for  $\SL(3,\Z)$).} \label{sl3Borel}
In the case $n=3$, the above definition of $\mathfrak h^n$ yields$$\mathfrak h^3=\left. \left\lbrace  xy=\left(\begin{matrix}   1 &x_1&x_3\\
0   & 1&x_2\\0&0&1\end{matrix}\right)\left(\begin{matrix}  y_1 y_2&0 &0\\0&y_1&0\\
0  &0 & 1\end{matrix}\right)\right\vert\  x_1,x_2,x_3\in \R,\ y_1,y_2>0\right\rbrace.$$
Let $s=(s_1,s_2,s_3)\in\C^3$ with $s_1+s_2+s_3=0$.  Then the power function is given by $$\big| d xyk|^s_{\mathcal P_{1,1,1}}:=\left|  \left(\begin{matrix}  y_1 y_2&0 &0\\0&y_1&0\\
0  &0 & 1\end{matrix}\right)\right|^s_{\mathcal P_{1,1,1}}=(y_1 y_2)^{s_1} y_1^{s_2}, $$where $ d$ is in the center of $\GL(3,\R)$ and $ k\in {\rm O}(n,\R)$.
    Then for $g\in \GL(3,\R)$ we have$$E_{\mathcal P_{1,1,1}}(g,s)=\sum_{\gamma\in \left(\begin{smallmatrix}   1 &* &*\\& 1 &*\\&
 & 1\end{smallmatrix}\right)\big\backslash \SL(3,\Z)} \big|\gamma g\big|^{s+(1,0,-1) }_{\mathcal P_{1,1,1}}.$$The shift by $(1,0,-1)$ makes the form of the functional equations as simple as possible.  Note that this shift corresponds to the case $n=3$ of the function $\rho_{_{\mathcal B}}$ of \eqref{PsubB}, and to the case $n=3$ and $\mathcal P=\mathcal B$ of the more general  $\rho$-function of Definition \ref{rhofunction} below.
 
    \begin{propositionnt} Let $g\in \GL(3,\R)$ and  $s=(s_1,s_2,s_3)\in\C^3$ with $s_1+s_2+s_3=0$.  Define $$E^*_{\mathcal P_{1,1,1}}(g,s)=\left(\prod_{1\le j<\ell\le 3}\zeta^*(1+s_j-s_\ell)\right)E_{\mathcal P_{1,1,1}}(g,s).$$
    Then  $E^*_{\mathcal P_{1,1,1}}(g,s)$ satisfies the functional equation

   $$E^*_{\mathcal P_{1,1,1}}(g,s_1,s_2,s_3)=E^*_{\mathcal P_{1,1,1}}(g,s_{\sigma(1)},s_{\sigma(2)},s_{\sigma(3)})$$for any $\sigma\in S_3$.  Moreover, this functional equation is unique in that, if $\mu$ is any real affine transformation of $s$ such that $$E^*_{\mathcal P_{1,1,1}}(g,s_1,s_2,s_3)=E^*_{\mathcal P_{1,1,1}}(g,\mu(s)),$$then $\mu(s)$ is a permutation of $s_1,s_2, s_3$.
   \end{propositionnt}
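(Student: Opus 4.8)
The plan is to treat the two assertions separately: the functional equation I would derive from the Fourier--Whittaker expansion in the spirit of the rest of the paper, while the uniqueness I would extract from the eigenvalue structure of the invariant differential operators.

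For the functional equation, since the adjacent transpositions $(1\,2)$ and $(2\,3)$ generate $S_3$, it suffices to prove invariance of $E^*_{\mathcal P_{1,1,1}}(g,s)$ under each of these. First I would write out the full Fourier--Whittaker expansion of $E_{\mathcal P_{1,1,1}}(g,s)$, separating the constant term from the non-degenerate coefficients. Each non-degenerate coefficient factors as an arithmetic divisor-type factor times a Jacquet--Whittaker function attached to the spectral parameter $s$. The completed $\GL(3)$ Whittaker function (Jacquet--Whittaker times its archimedean $\Gamma$-factors) is invariant under $S_3$ acting on $s$, by the standard Whittaker functional equations; the completed arithmetic factor (divisor sum times the matching finite $\zeta$-factors) is likewise $S_3$-invariant by $\zeta^*(w)=\zeta^*(1-w)$. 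Since $\prod_{j<\ell}\zeta^*(1+s_j-s_\ell)$ supplies exactly the $\Gamma$- and $\zeta$-factors needed for both completions, each non-degenerate coefficient of $E^*_{\mathcal P_{1,1,1}}$ is $S_3$-invariant. The constant term is a sum over $w\in S_3$ of power functions weighted by ratios of completed zetas, and the same functional equation of $\zeta^*$ renders the completed constant term $S_3$-invariant. As $E^*$ is determined by its Fourier coefficients, invariance follows. The main obstacle here is the careful bookkeeping that matches the $\Gamma$-factors of the completed Whittaker function against those in $\prod\zeta^*$, so that the archimedean and non-archimedean functional equations combine cleanly.

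For the uniqueness, suppose $\mu$ is a real affine transformation of the hyperplane $\{s_1+s_2+s_3=0\}$ with $E^*_{\mathcal P_{1,1,1}}(g,s)=E^*_{\mathcal P_{1,1,1}}(g,\mu(s))$ for all $g$. The series $E_{\mathcal P_{1,1,1}}(g,s)$ is built from the power function $|g|^{s+\rho_{_{\mathcal B}}}_{\mathcal B}$, which by Definition~\ref{AutLang} has Langlands parameter $s$; summing over $\gamma$ and multiplying by the ($g$-independent) zeta factors preserves this, so $E^*_{\mathcal P_{1,1,1}}(\cdot,s)$ is an eigenfunction of every invariant differential operator $D$ with eigenvalue $\lambda_D(s)$ a symmetric polynomial in $s$. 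Applying $D$ to both sides of the identity and using that $E^*_{\mathcal P_{1,1,1}}(\cdot,s)\not\equiv0$ gives $\lambda_D(s)=\lambda_D(\mu(s))$ for all $D$; since these eigenvalues generate the ring of $S_3$-symmetric polynomials in $s$, we conclude that $e_2(\mu(s))=e_2(s)$ and $e_3(\mu(s))=e_3(s)$ identically on the hyperplane.

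It remains to deduce $\mu\in S_3$ from these two constraints. Because $e_1=0$ on the hyperplane, $e_2(s)=-\tfrac12\sum_i s_i^2$, so $e_2(\mu(s))=e_2(s)$ for all $s$ means $\mu$ preserves the positive-definite form $\sum_i s_i^2$; writing $\mu(s)=Rs+b$ and comparing the quadratic, linear, and constant parts in $s$ forces $b=0$ and $R$ orthogonal, so $\mu$ is a linear isometry of the plane. Identifying the hyperplane with $\mathbb C$ so that the harmonic cubic $e_3(s)=s_1s_2s_3$ becomes a real constant multiple of $\re(w^3)$, the condition $e_3(\mu(s))=e_3(s)$ restricts $\mu$ to the orthogonal transformations fixing $\re(w^3)$, namely the dihedral group of order six, which is precisely the image of $S_3$ acting by coordinate permutations. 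Hence $\mu$ is one of the six permutations. The only delicate point is confirming that the invariant differential operators recover \emph{both} symmetric functions $e_2$ and $e_3$ rather than the Laplacian alone; this holds because for $\SL(3)$ the algebra of invariant differential operators is generated in degrees $2$ and $3$.
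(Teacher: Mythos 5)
Your functional-equation argument is essentially the paper's own: the paper likewise reads off the $S_3$-invariance from the factorization of the $(m,1)^{\rm th}$ coefficient of $E^*_{\mathcal P_{1,1,1}}$ into the divisor sum $\sum_{c_1c_2c_3=m}c_1^{s_1}c_2^{s_2}c_3^{s_3}$ times the completed Whittaker function $W^{(3)}_{(s_1,s_2,s_3)}$, both of which are permutation-invariant. One caution: you present this as a complete proof (``as $E^*$ is determined by its Fourier coefficients, invariance follows''), but your expansion accounts only for the constant and non-degenerate terms, whereas the Borel Eisenstein series on $\GL(3)$ also has degenerate non-constant coefficients (frequencies $(m_1,0)$ and $(0,m_2)$ in the non-abelian expansion), and the meromorphic continuation needed even to state the identity for all $s$ is nowhere established in your argument. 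The paper sidesteps this by being explicitly heuristic: it cites Bump and Langlands for the continuation and the six functional equations, and uses the coefficient computation only to confirm their shape. So this half of your proposal should be read as matching the paper's level of rigor, not exceeding it.

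Your uniqueness argument, by contrast, is a genuinely different and, as far as I can check, correct route. The paper proves uniqueness via Lemma \ref{lem:UniqueFEiffDivSum} and Proposition \ref{prop:OneBlock}: specializing $g=\diag(p^{-1},1,1)$ strips off the Whittaker factor, reducing the problem to showing that a real affine $\mu$ satisfying $p^{s_1}+p^{s_2}+p^{s_3}=p^{\mu_1(s)}+p^{\mu_2(s)}+p^{\mu_3(s)}$ for all $s$ and all primes $p$ must permute the $s_i$, which is done by letting $s_i\to\pm\infty$ and invoking the pigeonhole principle. You instead apply the invariant differential operators to both sides of the functional equation: since by the Harish--Chandra isomorphism the eigenvalue characters exhaust the $S_3$-symmetric polynomials in $s$, you obtain $e_2(\mu(s))=e_2(s)$ and $e_3(\mu(s))=e_3(s)$; positive definiteness of $\sum_i s_i^2$ on the real hyperplane forces the translation part of $\mu$ to vanish and its linear part to be orthogonal, and the stabilizer in ${\rm O}(2)$ of the harmonic cubic $s_1s_2s_3\propto\re(w^3)$ is the dihedral group of order six, which is exactly the image of $S_3$. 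The facts you lean on (generation of the invariant-operator algebra in degrees $2$ and $3$, non-vanishing of $E^*(\cdot,s)$ for generic $s$) are standard, and your final geometric steps check out. What your approach buys is a more conceptual proof that never touches Fourier coefficients, applies to any family sharing the eigenvalues of the power function, and handles affine $\mu$ directly. What it costs is generality: the dihedral-stabilizer step is special to rank two, so extending it to $r>3$ equal blocks would need a replacement (e.g., the fact that the fibers of $(e_2,\ldots,e_r)$ on the hyperplane are precisely $S_r$-orbits, plus a connectedness argument), whereas the paper's divisor-sum asymptotics generalize verbatim to Proposition \ref{prop:OneBlock} and, conditionally on Conjecture \ref{distinct}, to Proposition \ref{prop:IfMuIsLinear}.
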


 \begin{proof}
It's well-known that $E_{\mathcal P_{1,1,1}}(g,s)$ has analytic continuation and satisfies six functional equations, including the trivial relation (see \cite[Ch.~7]{s2}).  
 As we did for $\SL(2,\Z)$, we will determine these functional equations heuristically by considering  just the $(m,1)^{\rm th}$ Fourier coefficient of $E_{\mathcal P_{1,1,1}}(g,s)$, for a generic $m\in \Z_{>0}$.
 
It is proved in \cite{s2} that this $(m,1)^{\rm th}$ Fourier coefficient is given by\begin{align}&\int_0^1\int_0^1\int_0^1 E_{\mathcal P_{1,1,1}}\biggl(\left(\begin{smallmatrix} 1  &u_1 &u_3 \\
    &   1 & u_2  \\
    & &  1\end{smallmatrix}\right)g,s\biggr) e^{-2\pi i (mu_1+u_2)}\,du_1\,du_2\,du_3\\\nonumber&=\frac{1}{m\cdot\hskip-8pt\prod\limits_{1\le j<\ell\le 3}\zeta^*(1+s_j-s_\ell)}\left(\underset{c_1 c_2c_3=m}{\sum_{c_1,c_2,c_3\in \Z_{>0}}}c_1^{s_1}c_2^{s_2}c_3^{s_3}\right) W^{(3)}_{(s_1,s_2,s_3)}\biggl(\left(\begin{smallmatrix} m  & & \\
    &   1 &   \\
    & &  1\end{smallmatrix}\right)g\biggr),\end{align}where $W^{(3)}_{(s_1,s_2,s_3)}$ denotes the unique rapidly decaying $\GL(3,\R)$ Whittaker function with Langlands parameter $(s_1,s_2,s_3)\in\C^3$ (see \cite{s2}, \cite{s5}).
  It is known (\cite[\S 5.9]{s5}) that $W^{(3)}_{(s_1,s_2,s_3)}$ is invariant under any permutation of  $s_1, s_2, s_3$.  It is also immediate that the divisor sum satisfies the same invariances.
 
The uniqueness follows from Proposition \ref{prop:OneBlock} below.\end{proof}
   
   \begin{remark}\rm Note that the product $\prod\limits_{1\le j<\ell\le 3}\zeta^*(1+s_j-s_\ell)$ is not invariant under permutations of $(s_1,s_2,s_3)$.   It is for this reason that we need to multiply the Eisenstein series $E_{\mathcal P_{1,1,1}}(g,s)$ by this product to obtain our functional equations for $E^*_{\mathcal P_{1,1,1}}(g,s)$. \end{remark}
 \end{example}

 \begin{example} {\bf (\boldmath The Eisenstein series $E_{\mathcal P_{1,2},1\otimes\phi}(g,s)$,  $E_{\mathcal P_{2,1},\phi\otimes 1}(g,s)$ for  $\SL(3,\Z)$).} 
 Here we consider the partitions $3=1+2$ and $3=2+1$. In these cases we are twisting the Eisenstein series $E_{\mathcal P_{1,1,1}}(g,s)$ by a Maass form for $\SL(2,\Z)$.   
 Here, the 1 appearing in the notation $1\otimes\phi$ denotes the constant function $1$ on the upper left $1\times 1$ block of our $3\times 3$ matrix, and $\phi$ denotes a Maass form for $\SL(2,\Z)$, which is a function on the lower right $2\times 2$ block.  Similarly,  the notation $\phi\otimes1$ references a Maass form $\phi$ as a function on  the upper left  $2\times 2$ block of our $3\times 3$ matrix, and  the constant $1$ as a function on the lower right $1\times 1$ block.
 
 We first consider the partition $3=1+2$, represented by $\mathcal P_{1,2}$.  The power function in this case takes the following form: let $g=dxyk$, with $d$ a central element of $\GL(3,\R)$ and $k\in {\rm O}(3,\R)$. Then for $s=(s_1,s_2)\in\C^2$ with $s_1+2s_2=0$, we have
  $$\big|g\big|^s_{\mathcal P_{1,2}}:=\left|  \left(\begin{matrix} y_1y_2  & 0&0 \\
  0  &   y_1 &0   \\
 0   & 0&  1\end{matrix}\right)\right|^s_{\mathcal P_{1,2}}=(y_1 y_2)^{s_1}\left| \det \left(\begin{matrix} y_1 & 0 \\
 0   &   1    \end{matrix}\right)\right|^{s_2}=y_1^{s_1+s_2} y_2^{s_1}. $$Suppose $\phi$ is a Maass form for $\SL(2,\Z)$.  Associated to $\phi$ is a Langlands parameter $\al=(\al_1,\al_2)\in\C^2$, where $\al_1+\al_2=0$ and $\tfrac{1}{4}-\al_1^2=\tfrac{1}{4}-\al_2^2$ is the Laplace eigenvalue of $\phi$.  
    
By the Iwasawa decomposition, every $g\in\GL(3,\R)$ can be written in the form $g=\left(\begin{smallmatrix}\mathfrak m_1 (g)&* \\
    &   \mathfrak m_2(g)   \end{smallmatrix}\right)k$ for some $k\in {\rm O}(3,\R)$, where  $\mathfrak m_1(g)\in \GL(1,\R)\cong \R^\times$  and $\mathfrak m_2(g)\in \GL(2,\R)$.  Then we define the Eisenstein series\begin{equation}\label{EisDef12}E_{\mathcal P_{1,2},1\otimes \phi} (g,s)=\sum_{\gamma\in \left(\begin{smallmatrix} *  & *&* \\
    &   * &*   \\
    &* &  *\end{smallmatrix}\right)\big\backslash \SL(3,\Z)}\phi\big(\mathfrak m_2(\gamma g)\big)\,\big|\mathfrak m_1(\gamma g)\big|^{s_1+1}\, \big|\det \mathfrak m_2(\gamma g)\big|^{s_2-1/2}.\end{equation}
    
    Next we consider  the partition $3=2+1$ represented by $\mathcal P_{2,1}$.  The power function in this case takes the following form: as before, let $g=dxyk$, with $d$ a central element of $\GL(3,\R)$ and $k\in {\rm O}(3,\R)$.  Then for $s=(s_1,s_2)\in\C^2$ with $2s_1+s_2=0$, we have
  $$\big|g\big|^s_{\mathcal P_{2,1}}:=\left|  \left(\begin{matrix} y_1y_2  &0 &0 \\
 0   &   y_1 &  0 \\
   0 &0&  1\end{matrix}\right)\right|^s_{\mathcal P_{2,1}}= \left| \det \left(\begin{matrix} y_1 y_2 & 0 \\
  0  &   y_1    \end{matrix}\right)\right|^{s_1} =y_1^{2s_1 } y_2^{s_1}. $$Suppose $\phi$ is a Maass form for $\SL(2,\Z)$ with Langlands parameter $\al=(\al_1,\al_2)\in\C^2$, where $\al_1+\al_2=0$. By the Iwasawa decomposition, every $g\in\GL(3,\R)$ can be written in the form $g=\left(\begin{smallmatrix}\mathfrak m_1 (g)&* \\
    &   \mathfrak m_2(g)   \end{smallmatrix}\right)k$ for some $k\in {\rm O}(3,\R)$, where  $\mathfrak m_1(g)\in \GL(2,\R)$  and $\mathfrak m_2(g)\in \GL(1,\R)\cong \R^\times$.  Then we define the Eisenstein series\begin{equation}\label{P21Eisdef}E_{\mathcal P_{2,1},\phi\otimes 1} (g,s):=\sum_{\gamma\in \left(\begin{smallmatrix} *  & *&* \\
  *  &   * &*   \\
    & &  *\end{smallmatrix}\right)\big\backslash \SL(3,\Z)}\phi\big(\mathfrak m_1(\gamma g)\big)\,\big|\mathfrak m_1(\gamma g)\big|^{s_1+1/2}\, \big|\det \mathfrak m_2(\gamma g)\big|^{s_2-1}.\end{equation}

 Recall that the L-function $L(s,\phi)$ and its completion $L^*(s,\phi)$ satisfy (see  \cite[Theorem 3.15.3]{s5}) the functional equation
\begin{align*}L^*(s,\phi)&:=\pi^{-s}\Gamma(\tfrac{s+\alpha_1+\epsilon}{2})\Gamma(\tfrac{s+\alpha_2+\epsilon}{2})L(s,\phi)=(-1)^\epsilon L^*(1-s,\phi),\end{align*} 
where $\epsilon=0$ if $\phi$ is an even Maass form and $\epsilon=1$ if $\phi$ is odd.

    \begin{propositionnt} Let  $E_{\mathcal P_{1,2},1\otimes \phi}$ and $E_{\mathcal P_{2,1},  \phi\otimes1}$ be as in equations  
    \eqref{EisDef12} and \eqref{P21Eisdef}.  Define
   \begin{align*}E^*_{\mathcal P_{1,2},1\otimes\phi}(g,s)&:= L^*(1+s_2-s_1,\phi)E_{\mathcal P_{1,2},1\otimes\phi}(g,s),\qquad \big(s=(s_1,s_2)\in\C^2,\ s_1+2s_2=0\big),\\E^*_{\mathcal P_{2,1},\phi\otimes1}(g,s)&:= L^*(1+s_2-s_1,\phi)E_{\mathcal P_{2,1},\phi\otimes1}(g,s),\qquad \big(s=(s_1,s_2)\in\C^2,\ 2s_1+s_2=0\big).\end{align*}
Then the functional equation takes the form
   $$E^*_{\mathcal P_{1,2},1\otimes\phi}\big(g,(s_1,s_2)\big)=E^*_{\mathcal P_{2,1},\phi\otimes1}\big(g,(s_2,s_1)\big)$$for all $s_1,s_2\in\C$.   Moreover, this functional equation is unique in that, if $\mu$ is any real linear transformation of $s$ such that  $$E^*_{\mathcal P_{1,2},1\otimes\phi}\big(g,(s_1,s_2)\big)=E^*_{\mathcal P_{2,1},\phi\otimes1}\big(g,\mu(s)\big),$$then $\mu(s)$ is a permutation of $s_1,s_2$.

      \begin{remark}\rm  We note that in  $E^*_{\mathcal P_{1,2},1\otimes\phi}\big(g,(s_1,s_2)\big)$ it is a requirement that $s_1+2s_2=0$.  Similarly, the definition of $E^*_{\mathcal P_{2,1},\phi\otimes1}\big(g,(s_2,s_1)\big)$ requires that $2s_2+s_1=0$, which is, of course, the same condition.  \end{remark}
   \end{propositionnt}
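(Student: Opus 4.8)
The plan is to follow the heuristic Fourier-coefficient method used in Example~\ref{sl3Borel}: I would compute a single generic non-degenerate Fourier--Whittaker coefficient of each of the two Eisenstein series, exhibit it as a product of an arithmetic factor and a $\GL(3,\R)$ Whittaker function (with the normalizing $L$-factor clearing the denominator supplied by the first coefficient, cf.\ \cite{s6}), and then check that the two coefficients agree after the interchange $(s_1,s_2)\mapsto(s_2,s_1)$.

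The first step is to record the $\GL(3)$ Langlands parameter of each induced series. Writing the Langlands parameter of $\phi$ as $(\alpha_1,\alpha_2)$ with $\alpha_1+\alpha_2=0$, one checks that the shifts $+1,\,-\tfrac12$ in \eqref{EisDef12} and $+\tfrac12,\,-1$ in \eqref{P21Eisdef} are exactly the characters $\delta_P^{1/2}$ for the two maximal parabolics, so that $E_{\mathcal P_{1,2},1\otimes\phi}(\cdot,s)$ has Langlands parameter $(s_1,\ \alpha_1+s_2,\ \alpha_2+s_2)$ (using $s_1+2s_2=0$), while $E_{\mathcal P_{2,1},\phi\otimes1}(\cdot,s)$ has parameter $(\alpha_1+s_1,\ \alpha_2+s_1,\ s_2)$ (using $2s_1+s_2=0$). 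Substituting $(s_2,s_1)$ for $s$ in the second series turns the latter into $(\alpha_1+s_2,\ \alpha_2+s_2,\ s_1)$, which is the same \emph{unordered} triple as for the first series. Since the rapidly-decaying $\GL(3,\R)$ Whittaker function $W^{(3)}_{(\cdot)}$ depends only on its Langlands parameter and is invariant under all permutations of it (\cite[\S 5.9]{s5}), the Whittaker parts of the two non-degenerate coefficients coincide; and since the normalized arithmetic factor is the $\GL(3)$ Hecke eigenvalue attached to that same unordered parameter---just as the divisor sum $\sum_{c_1c_2c_3=m}c_1^{s_1}c_2^{s_2}c_3^{s_3}$ appeared in Example~\ref{sl3Borel}---it too matches on both sides. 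Hence every non-degenerate Fourier coefficient of $E^*_{\mathcal P_{1,2},1\otimes\phi}(g,(s_1,s_2))$ equals the corresponding coefficient of $E^*_{\mathcal P_{2,1},\phi\otimes1}(g,(s_2,s_1))$.

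It then remains to match the constant (degenerate) terms, and this is where I expect the main obstacle to lie. The non-degenerate matching is essentially formal once the Langlands parameters are computed and the permutation invariance of $W^{(3)}_{(\cdot)}$ is invoked, but the constant terms of the two maximal-parabolic series consist of several summands built from $L^*(\cdot,\phi)$ and the Riemann zeta factors hidden inside it. Here the functional equation $L^*(s,\phi)=(-1)^{\epsilon}L^*(1-s,\phi)$ recalled above enters: the interchange sends the normalizing factor $L^*(1+s_2-s_1,\phi)$ on one side to $L^*(1+s_1-s_2,\phi)$ on the other, and the careful part is to verify that the individual degenerate summands are exchanged---with the parity sign $(-1)^{\epsilon}$ correctly absorbed---exactly by this functional equation together with that of $\zeta^*$. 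Assembling the degenerate and non-degenerate terms yields the asserted identity.

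For the uniqueness, suppose $\mu$ is a real linear transformation with $E^*_{\mathcal P_{1,2},1\otimes\phi}(g,(s_1,s_2))=E^*_{\mathcal P_{2,1},\phi\otimes1}(g,\mu(s))$. Equality of the non-degenerate Fourier coefficients forces the two $\GL(3)$ Langlands parameters to agree as unordered triples for all admissible $s$. Since the constraint $s_1+2s_2=0$ leaves a single free parameter, I would write $s=(-2t,t)$ and $\mu(s)=(\lambda t,-2\lambda t)$, so that the required identity becomes $\{-2t,\ \beta+t,\ -\beta+t\}=\{\beta+\lambda t,\ -\beta+\lambda t,\ -2\lambda t\}$, where $\alpha=(\beta,-\beta)$; comparing these multisets for all $t$ and generic $\beta$ forces $\lambda=1$, i.e.\ $\mu$ is the interchange of $s_1$ and $s_2$. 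This is the analogue for this pair of partitions of the uniqueness argument of Proposition~\ref{prop:OneBlock}.
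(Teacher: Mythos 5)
Your verification of the functional equation is, in substance, the paper's own argument: the Langlands parameters you compute, $(s_1,\alpha_1+s_2,\alpha_2+s_2)$ and $(\alpha_1+s_1,\alpha_2+s_1,s_2)$, are exactly those in the paper's formulas \eqref{12Fourier} and \eqref{21Fourier} (quoted from \cite{s3}), and matching the non-degenerate $(m,1)$ coefficients under $(s_1,s_2)\mapsto(s_2,s_1)$ via permutation invariance of $W^{(3)}$ and the symmetry of the divisor sum is precisely what the paper does. One caution on framing: the step you call ``the main obstacle'' --- matching the degenerate/constant terms --- is not part of the paper's proof at all. The paper takes the functional equation itself from Langlands \cite{s9} and offers the coefficient computation only as a heuristic confirmation. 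If you want a complete argument you should either cite \cite{s9}, as the paper does, or accept that your whole derivation remains heuristic even after the constant terms are matched, since Maass's method also requires the meromorphic continuation of every coefficient; as written, your proposal promises an assembly of degenerate and non-degenerate terms that it never carries out.

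Where you genuinely diverge is the uniqueness. The paper deduces it from Proposition~\ref{prop:IfMuIsLinear}, which works with the divisor sums $\sum_{c_1c_2=m}\lambda_\phi(c_1)c_1^{s_1}c_2^{s_2}$ and needs distinctness and nonvanishing of Hecke eigenvalues (Conjecture~\ref{distinct}, provable for two forms). You instead compare archimedean Langlands parameters as multisets along the constraint line $s=(-2t,t)$ and force $\lambda=1$. This buys something real: your argument requires no arithmetic input about Hecke eigenvalues, so it is unconditional where the paper's general proposition is conditional. But it uses one fact silently: that $W^{(3)}_{\alpha}=W^{(3)}_{\alpha'}$ as functions on $\GL(3,\R)$ forces $\alpha'$ to be a permutation of $\alpha$ --- the \emph{converse} of the invariance you cite from \cite[\S 5.9]{s5}. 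This is true (a nonzero joint eigenfunction of the invariant differential operators determines its eigenvalues, hence $\alpha$ modulo $S_3$), but it must be stated, since it is the entire engine of your argument. Also, your appeal to ``generic $\beta$'' is neither available (the Maass form $\phi$, hence $\beta$, is fixed in the proposition) nor needed: because the multiset identity must hold for \emph{all} $t$, any matching other than $-2t=-2\lambda t$ confines $t$ to finitely many values, so $\lambda=1$ follows for every fixed $\beta$.
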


 \begin{proof} The meromorphic continuation of the Eisenstein series and the functional equation 
   $$E^*_{\mathcal P_{1,2},1\otimes\phi}\big(g,(s_1,s_2)\big)=E^*_{\mathcal P_{2,1},\phi\otimes1}\big(g,(s_2,s_1)\big)$$were proved by Langlands \cite{s9}.  We will determine this functional equation heuristically by considering just the $(m,1)^{\rm th}$ Fourier coefficients of these Eisenstein series.

  Recall  the definition of the adjoint L-function:  $L(s, \Ad{\phi}) := L(s, \phi\times\overline{\phi})/\zeta(s)$ where $L(s, \phi\times\overline{\phi})$ is the Rankin-Selberg convolution L-function as in \S12.1 of 
\cite{s5}.  
The completed adjoint L-function at $s=1$ is  given by
$$L^*(1,\Ad\phi):=\Gamma(\tfrac{1}{2}+\alpha_1)\Gamma(\tfrac{1}{2}+\alpha_2)L(1,\Ad \phi).$$

It is proved in \cite{s3} that, if $s_1+2s_2=0$, then the $(m,1)^{\rm th}$  Fourier coefficient of $E_{\mathcal P_{1,2},1\otimes \phi}$ is given by\begin{align*} &\int_0^1\int_0^1\int_0^1 E_{\mathcal P_{1,2},1\otimes \phi}\biggl(\left(\begin{smallmatrix} 1  &u_1 &u_3 \\
    &   1 & u_2  \\
    & &  1\end{smallmatrix}\right)g,(s_1,s_2)\biggr) e^{-2\pi i (mu_1+u_2)}\,du_1\,du_2\,du_3\\\nonumber&= \frac{1}{m L^*(1,\Ad \phi)^{1/2} L^*(1+s_2-s_1,\phi)}\left (\underset{c_1 c_2 =m}{\sum_{c_1,c_2 \in \Z_{>0}}}\lambda_\phi(c_1)c_1^{s_1}c_2^{s_2} \right) W^{(3)}_{(s_1,s_2+\alpha_1,s_2+\alpha_2)}\biggl(\left(\begin{smallmatrix} m  & & \\
    &   1 &   \\
    & &  1\end{smallmatrix}\right)g\biggr).\end{align*}We therefore have
    \begin{align}\label{12Fourier}&\int_0^1\int_0^1\int_0^1 E^*_{\mathcal P_{1,2},1\otimes \phi}\biggl(\left(\begin{smallmatrix} 1  &u_1 &u_3 \\
    &   1 & u_2  \\
    & &  1\end{smallmatrix}\right)g,(s_1,s_2)\biggr) e^{-2\pi i (mu_1+u_2)}\,du_1\,du_2\,du_3\\\nonumber&\hskip40pt= \frac{1}{m L^*(1,\Ad \phi)^{1/2}  }\left (\underset{c_1 c_2 =m}{\sum_{c_1,c_2 \in \Z_{>0}}}\lambda_\phi(c_1)c_1^{s_1}c_2^{s_2} \right) W^{(3)}_{(s_1,s_2+\alpha_1,s_2+\alpha_2)}\biggl(\left(\begin{smallmatrix} m  & & \\
    &   1 &   \\
    & &  1\end{smallmatrix}\right)g\biggr).\end{align}
    Similarly, it is proved in \cite{s3} that,  if $2s_1+s_2=0$, then the $(m,1)^{\rm th}$  Fourier coefficient of $E^*_{\mathcal P_{2,1}, \phi\otimes1}$ is given by\begin{align}\label{21Fourier}&\int_0^1\int_0^1\int_0^1 E^*_{\mathcal P_{2,1},\phi\otimes 1}\biggl(\left(\begin{smallmatrix} 1  &u_1 &u_3 \\
    &   1 & u_2  \\
    & &  1\end{smallmatrix}\right)g,(s_1,s_2)\biggr) e^{-2\pi i (mu_1+u_2)}\,du_1\,du_2\,du_3\\\nonumber&\hskip40pt= \frac{1}{m L^*(1,\Ad \phi)^{1/2} }\left (\underset{c_1 c_2 =m}{\sum_{c_1,c_2 \in \Z_{>0}}}\lambda_\phi(c_2)c_1^{s_1}c_2^{s_2} \right) W^{(3)}_{(s_1 +\alpha_1,s_1+\alpha_2,s_2)}\biggl(\left(\begin{smallmatrix} m  & & \\
    &   1 &   \\
    & &  1\end{smallmatrix}\right)g\biggr).\end{align} 
  Note that, if we interchange $s_1$ and $s_2$ in the divisor sum in \eqref{12Fourier}, we get the divisor sum appearing in \eqref{21Fourier}.  Also, this interchange sends $W^{(3)}_{(s_1,s_2+\alpha_1,s_2+\alpha_2)}$ to $W^{(3)}_{(s_2,s_1+\alpha_1,s_1+\alpha_2)}$, which equals $W^{(3)}_{(s_1 +\alpha_1,s_1+\alpha_2,s_2)}$,  since $W^{(3)}_{(a,b,c)}$ is invariant under any permutation of $(a,b,c)$.  Finally, this switch  transforms the condition $s_1+2s_2=0$ to the condition $2s_1+s_2=0$.      
  
  The uniqueness now follows from Proposition \ref{prop:IfMuIsLinear} below.
 \end{proof} 

     \end{example}

 \begin{example}{\bf (\boldmath The Eisenstein series $E_{\mathcal P_{2,2},\phi_1\otimes\phi_2}(g,s)$  for  $\SL(4,\Z)$).} 
 Here we consider the partition $4=2+2$. In this case our construction involves a twist by two Maass forms for $\SL(2,\Z)$: a Maass form $\phi_1$  with Langlands parameter $(\al_{1,1},\al_{1,2})\in \C^2$ with $\al_{1,1}+\al_{1,2}=0$, and a Maass form $\phi_2$  with Langlands parameter $(\al_{2,1},\al_{2,2})\in \C^2$ with $\al_{2,1}+\al_{2,2}=0$.   Here, $\tfrac{1}{4}-\al_{j,1}^2 $ is the Laplace eigenvalue of $\phi_j$, for $j=1,2$.
 
  The power function in this case takes the following form: let $g=dxyk$, with $d$ a central element of $\GL(4,\R)$ and $k\in {\rm O}(4,\R)$. Then for $s=(s_1,s_2)\in\C^2$ with $2s_1+2s_2=0$, we have
\begin{align*}\big|g\big|^s_{\mathcal P_{2,2}}&:=\left|  \left(\begin{matrix} y_1y_2y_3  & 0&0&0 \\0& y_1y_2  & 0&0 \\
 0& 0  &   y_1 &0   \\
0& 0   & 0&  1\end{matrix}\right)\right|^s_{\mathcal P_{2,2}}  =\left| \det \left(\begin{matrix} y_1y_2 y_3 & 0 \\
0&y_1 y_2        \end{matrix}\right)\right|^{s_1}\cdot \left| \det \left(\begin{matrix} y_1 & 0 \\
 0   &   1    \end{matrix}\right)\right|^{s_2}\\&\\&=y_1^{2s_1+s_2} y_2^{2s_1}y_3^{s_1}. \end{align*} 
    
By the Iwasawa decomposition, every $g\in\GL(4,\R)$ can be written in the form $g=\left(\begin{smallmatrix}\mathfrak m_1 (g)&* \\
    &   \mathfrak m_2(g)   \end{smallmatrix}\right)k$ for some $k\in {\rm O}(4,\R)$, where  $\mathfrak m_1(g), \mathfrak m_2(g)\in \GL(2,\R)$.  Then we define the Eisenstein series\begin{equation}\label{EisDef22}E_{\mathcal P_{2,2},\phi_1\otimes \phi_2} (g,s)\hskip8pt=\hskip-18pt\sum_{\gamma\in \left(\begin{smallmatrix} 
    *  &*  & *&* \\
    *  &*  &*  & *\\
    &    &*  &*  \\*  &
    &* &  *\end{smallmatrix}\right)\Big\backslash \SL(4,\Z)} \hskip-20pt \phi_1 \big(\mathfrak m_1(\gamma g)\big)\,  \phi_2 \big(\mathfrak m_2(\gamma g)\big) \cdot \big  |   \gamma g\big|^{s+(1,-1)}_{\mathcal P_{2,2}}.\end{equation}
    
 Recall the Rankin-Selberg L-function $L^*(s,\phi_1\times \phi_2)$ as defined in Chapter 12 of \cite{s5}.    The completed L-function for this is given by
\begin{align*}L^*(s,\phi_1\times \phi_2)&:=\pi^{-2s }\left(\,\prod_{j,k=1}^2\Gamma\Big(\frac{s+\alpha_{1,j}+\al_{2,k}}{2}\Big)\right)\cdot L(s,\phi_1\times\phi_2).\end{align*} 

    \begin{propositionnt} Let  $E_{\mathcal P_{2,2},\phi_1\otimes \phi_2}$ be as in equation
    \eqref{EisDef22}.  Define
   \begin{align*}E^*_{\mathcal P_{2,2},\phi_1\otimes\phi_2}(g,s)&:= L^*(1+s_2-s_1,\phi_1\times \phi_2)E_{\mathcal P_{2,2},\phi_1\otimes\phi_2}(g,s) ,\end{align*}
where  $s=(s_1,s_2)\in\C^2$ satisfies $2s_1+2s_2=0$. Then the functional equation takes the form
   $$E^*_{\mathcal P_{2,2},\phi_1\otimes\phi_2}\big(g,(s_1,s_2)\big)=E^*_{\mathcal P_{2,2},\phi_2\otimes\phi_1}\big(g,(s_2,s_1)\big)$$
for all $s_1,s_2\in\C$.  Moreover, this functional equation is unique in that, if if $\sigma\in S_2$ and $\mu$ is any real linear transformation of $s$ such that  $$E^*_{\mathcal P_{2,2},\phi_1\otimes\phi_2}\big(g,(s_1,s_2)\big)=E^*_{\mathcal P_{2,2},\phi_{\sigma(1)}\otimes\phi_{\sigma(2)}}\big(g,\mu(s)\big),$$then $\mu=\sigma$.
\end{propositionnt}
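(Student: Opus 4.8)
The meromorphic continuation of $E_{\mathcal P_{2,2},\phi_1\otimes\phi_2}$, together with the functional equation relating it to $E_{\mathcal P_{2,2},\phi_2\otimes\phi_1}$, were established by Langlands \cite{s9}, \cite{s11}. Exactly as in the three preceding examples, the plan is to confirm the functional equation heuristically by inspecting a single non-constant Fourier coefficient, and then to deduce the uniqueness statement from the general linearity proposition. The only new feature is that both parabolic blocks have size $2$, so two $\GL(2,\Z)$ Maass forms enter and the Rankin--Selberg factor $L^*(\,\cdot\,,\phi_1\times\phi_2)$ plays the role that $L^*(\,\cdot\,,\phi)$ played in the $\mathcal P_{1,2}$ case.

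First I would record, via the template computation of \cite{s3}, the $(m,1,1)^{\rm th}$ Fourier coefficient of the \emph{normalized} series. On the hyperplane $2s_1+2s_2=0$ it should take the form
\[
\int_0^1\!\cdots\!\int_0^1 E^*_{\mathcal P_{2,2},\phi_1\otimes\phi_2}\big(ug,(s_1,s_2)\big)\,\psi_m(u)\,du
=\frac{D_{\phi_1,\phi_2}(s)\;W^{(4)}_{\beta(s)}\!\big(\diag(m,1,1,1)\,g\big)}{m\,L^*(1,\Ad\phi_1)^{1/2}L^*(1,\Ad\phi_2)^{1/2}},
\]
where $\psi_m$ is the relevant additive character on $U_4(\R)$, the divisor sum is $D_{\phi_1,\phi_2}(s)=\sum_{c_1c_2=m}\lambda_{\phi_1}(c_2)\lambda_{\phi_2}(c_1)\,c_1^{s_1}c_2^{s_2}$ (with the placement of Hecke eigenvalues exactly as in \eqref{12Fourier}), and $\beta(s)=(s_1+\alpha_{1,1},\,s_1+\alpha_{1,2},\,s_2+\alpha_{2,1},\,s_2+\alpha_{2,2})$ is the Langlands parameter of the $\GL(4,\R)$ Whittaker function. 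The point is that multiplying $E_{\mathcal P_{2,2},\phi_1\otimes\phi_2}$ by $L^*(1+s_2-s_1,\phi_1\times\phi_2)$ clears precisely the non-symmetric denominator, leaving only the factor $L^*(1,\Ad\phi_1)^{1/2}L^*(1,\Ad\phi_2)^{1/2}$, which is invariant under $\phi_1\leftrightarrow\phi_2$.

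With this in hand the functional equation is immediate. Applying $s_1\leftrightarrow s_2$ together with $\phi_1\leftrightarrow\phi_2$ carries $D_{\phi_1,\phi_2}(s_1,s_2)$ to $D_{\phi_2,\phi_1}(s_2,s_1)$; these two divisor sums agree after relabeling $c_1\leftrightarrow c_2$. The same operation sends $\beta(s_1,s_2)$ to $(s_2+\alpha_{2,1},s_2+\alpha_{2,2},s_1+\alpha_{1,1},s_1+\alpha_{1,2})$, which is merely a permutation of the four entries of $\beta(s_1,s_2)$; since $W^{(4)}_{(a,b,c,d)}$ is invariant under all permutations of $(a,b,c,d)$ by \cite[\S5.9]{s5}, the Whittaker factor is unchanged. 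As every non-constant Fourier coefficient transforms in this way and the constant term matches by the Rankin--Selberg functional equation for $L^*(s,\phi_1\times\phi_2)$, this establishes $E^*_{\mathcal P_{2,2},\phi_1\otimes\phi_2}(g,(s_1,s_2))=E^*_{\mathcal P_{2,2},\phi_2\otimes\phi_1}(g,(s_2,s_1))$.

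For the uniqueness I would invoke the general linear-uniqueness result, Proposition \ref{prop:IfMuIsLinear}. The mechanism is that, on the hyperplane $2s_1+2s_2=0$, the divisor sum $D_{\phi_1,\phi_2}(s)$, viewed as a holomorphic Dirichlet polynomial, faithfully records the exponent pair $(s_1,s_2)$ attached to each $(c_1,c_2)$; any real linear $\mu$ satisfying $E^*_{\mathcal P_{2,2},\phi_1\otimes\phi_2}(g,s)=E^*_{\mathcal P_{2,2},\phi_{\sigma(1)}\otimes\phi_{\sigma(2)}}(g,\mu(s))$ must carry this finite collection of monomials to itself, which pins $\mu$ down to a coordinate permutation and then identifies it with $\sigma$. \textbf{The main obstacle is precisely this uniqueness step:} verifying the functional equation is essentially bookkeeping once the Fourier coefficient is written down, whereas ruling out exotic real-linear maps---in particular those mixing the real and imaginary parts of the single free complex variable on the hyperplane---is the genuine content, and is exactly what Proposition \ref{prop:IfMuIsLinear} is designed to supply.
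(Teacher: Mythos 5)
Your proposal follows essentially the same route as the paper: cite Langlands for the continuation and functional equation, confirm it heuristically on the $(m,1,1)^{\rm th}$ Fourier coefficient via the template formula of \cite{s3} (symmetry of the divisor sum under the simultaneous swap $s_1\leftrightarrow s_2$, $\phi_1\leftrightarrow\phi_2$, plus permutation invariance of the $\GL(4,\R)$ Whittaker function), and reduce uniqueness to Proposition \ref{prop:IfMuIsLinear}. The small discrepancies with the paper's formula (your power of $m$ in the denominator versus $m^{3/2}$, and your placement of the Hecke eigenvalues $\lambda_{\phi_1}(c_2)\lambda_{\phi_2}(c_1)$ versus $\lambda_{\phi_1}(c_1)\lambda_{\phi_2}(c_2)$) are immaterial, since your version of the swap-and-relabel argument is internally consistent.
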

   
\begin{remark}\rm  In  $E^*_{\mathcal P_{2,2},\phi_1\otimes\phi_2}\big(g,(s_1,s_2)\big)$, as required we have $2s_1+2s_2=0$.  Similarly, in  $E^*_{\mathcal P_{2,2},\phi_2\otimes\phi_1}\big(g,(s_2,s_1)\big)$, as required we have $2s_2+2s_1=0$, which is, of course, the same condition.  \end{remark}

 \begin{proof} The meromorphic continuation of the Eisenstein series and the functional equation 
 $$E^*_{\mathcal P_{2,2},\phi_1\otimes\phi_2}\big(g,(s_1,s_2)\big)=E^*_{\mathcal P_{2,2},\phi_2\otimes\phi_1}\big(g,(s_2,s_1)\big)$$were proved by Langlands \cite{s9}.  We will determine this  functional equation  heuristically by considering just the $(m,1,1)^{\rm th}$ Fourier coefficient of this Eisenstein series.
   
It is proved in \cite{s3} that, if $2s_1+2s_2=0$, then the $(m,1,1)^{\rm th}$  Fourier coefficient of $E^*_{\mathcal P_{2,2},\phi_1\otimes \phi_2}$ is given by\begin{align}\label{22Fourier-a}&\int_0^1\cdots \int_0^1 E^*_{\mathcal P_{2,2},\phi_1\otimes \phi_2}\biggl(\left(\begin{smallmatrix} 1  &u_1 &u_4& u_6 \\ &1  &u_2 &u_5 \\
   & &   1 & u_3&  \\
    & & & 1\end{smallmatrix}\right)g,(s_1,s_2)\biggr) e^{-2\pi i (mu_1+u_2+u_3)}\,\prod_{i=1}^6 du_i\\\nonumber&\hskip15pt= \frac{\displaystyle\left (\underset{c_1 c_2 =m}{\sum_{c_1,c_2 \in \Z_{>0}}}\lambda_{\phi_1}(c_1)\lambda_{\phi_2}(c_2)c_1^{s_1}c_2^{s_2} \right) W^{(4)}_{(s_1+\alpha_{1,1},s_1+\alpha_{1,2},s_2+\alpha_{2,1},s_2+\alpha_{2,2})}\biggl(\biggl(\begin{smallmatrix} m  & & &\\ &1  & & \\
   & &   1 &   \\
 &   & &  1\end{smallmatrix}\biggr)g\biggr)}{m^{3/2} L^*(1,\Ad \phi_1)^{1/2}  L^*(1,\Ad \phi_2)^{1/2}  }.\end{align}
Interchanging $\phi_1$ and $\phi_2$ we find that, if $2s_1+2s_2=0$, then the $(m,1,1)^{\rm th}$  Fourier coefficient of $E_{\mathcal P_{2,2}, \phi_2\otimes\phi_1}$ is given by\begin{align}\label{22Fourier-b}&\int_0^1\cdots \int_0^1 E^*_{\mathcal P_{2,2},\phi_2\otimes \phi_1}\biggl(\left(\begin{smallmatrix} 1  &u_1 &u_4& u_6 \\ &1  &u_2 &u_5 \\
   & &   1 & u_3&  \\
    & & & 1\end{smallmatrix}\right)g,(s_1,s_2)\biggr) e^{-2\pi i (mu_1+u_2+u_3)}\,\prod_{i=1}^6 du_i\\\nonumber&\hskip15pt= \frac{\displaystyle\left (\underset{c_1 c_2 =m}{\sum_{c_1,c_2 \in \Z_{>0}}}\lambda_{\phi_2}(c_1)\lambda_{\phi_1}(c_2)c_1^{s_1}c_2^{s_2} \right) W^{(4)}_{(s_1+\alpha_{2,1},s_1+\alpha_{2,2},s_2+\alpha_{1,1},s_2+\alpha_{1,2})}\biggl(\biggl(\begin{smallmatrix} m  & & &\\ &1  & & \\
   & &   1 &   \\
 &   & &  1\end{smallmatrix}\biggr)g\biggr)}{m^{3/2} L^*(1,\Ad \phi_1)^{1/2}  L^*(1,\Ad \phi_2)^{1/2}  }.\end{align}

Note that, if we interchange $s_1$ and $s_2$ in the divisor sum in \eqref{22Fourier-a}, we get the divisor sum appearing in \eqref{22Fourier-b}.  Also, this interchange sends the Whittaker function with Langlands parameter\begin{equation*} {(s_1+\alpha_{1,1},s_1+\alpha_{1,2},s_2+\alpha_{2,1},s_2+\alpha_{2,2})}\end{equation*}    to the Whittaker function with  Langlands parameter
\begin{equation}\label{tuple2} {(s_2+\alpha_{1,1},s_2+\alpha_{1,2},s_1+\alpha_{2,1},s_1+\alpha_{2,2})}. \end{equation} The Whittaker function with Langlands parameter given by \eqref{tuple2}  equals the Whittaker function in equation \eqref{22Fourier-b} because the Whittaker function is invariant under permutations of the Langlands parameters.

  The uniqueness now follows from Proposition \ref{prop:IfMuIsLinear} below.
\end{proof}

     \end{example}
 
\section{\large\boldmath  \bf Eisenstein series for a parabolic subgroup of $\GL(n,\R)$}
\label{EisSeriesDef}

\begin{definition}{\bf (Parabolic Subgroup).}\label{GLnParabolic} For $n\ge 2$ and $1\le r\le n,$ consider a partition of $n$ given by
$n = n_1+\cdots +n_r$ with positive integers $n_1,\cdots,n_r.$  We define the {\it standard parabolic subgroup} $$\mathcal P := \mathcal P_{n_1,n_2, \ldots,n_r} := \left\{\left(\begin{matrix} \GL(n_1) & * & \cdots &*\\
0 & \GL(n_2) & \cdots & *\\
\vdots & \vdots & \ddots & \vdots \\
0 & 0 &\cdots & \GL(n_r)\end{matrix}\right)\right\}.$$

Letting $I_r$ denote the $r\times r$ identity matrix, the subgroup
$$N^{\mathcal P} := \left\{\left(\begin{matrix} I_{n_1} & * & \cdots &*\\
0 & I_{n_2} & \cdots & *\\
\vdots & \vdots & \ddots & \vdots \\
0 & 0 &\cdots & I_{n_r}\end{matrix}\right)\right\}$$
is the unipotent radical of $\mathcal P$.  The subgroup
$$M^{\mathcal P} := \left\{\left(\begin{matrix} \GL(n_1) & 0 & \cdots &0\\
0 & \GL(n_2) & \cdots & 0\\
\vdots & \vdots & \ddots & \vdots \\
0 & 0 &\cdots & \GL(n_r)\end{matrix}\right)\right\}$$
 is the standard choice of Levi subgroup of $\mathcal P$.
\end{definition}

 \begin{definition}{\bf (Automorphic form $\Phi$ associated to a parabolic $\mathcal P$).} \label{def:maassparabolic} Let $n\ge 2$. Consider a partition $n = n_1+\cdots +n_r$ with $1 < r < n$. Let  $\mathcal P := \mathcal P_{n_1,n_2, \ldots,n_r} \subseteq \GL(n,\mathbb R).$
  For $i = 1,2,\ldots, r$, let
$\phi_i:\GL(n_i,\mathbb R)\to\mathbb C$ be either the constant function 1 (if $n_i=1$) or a Maass cusp form for $\SL(n_i,\mathbb Z)$ (if $n_i>1$).  {\it The form} $\Phi := \phi_1\otimes \cdots\otimes \phi_r$ is defined on $\GL(n,\mathbb R)=\mathcal P(\mathbb R)K$ (where $K= \text{\rm O}(n,\mathbb R))$ by the formula
$$\Phi(u \mathfrak m k ) := \prod_{i=1}^r \phi_i(\mathfrak  m_i), \qquad (u\in  N^{\mathcal P}, \mathfrak m\in  M^{\mathcal P},k\in K)$$
where
  $\mathfrak m \in M^{\mathcal P}$ has the form
$\mathfrak m = \left(\begin{smallmatrix} \mathfrak m_1 & 0 & \cdots &0\\
0 & \mathfrak m_2 & \cdots & 0\\
\vdots & \vdots & \ddots & \vdots \\
0 & 0 &\cdots & \mathfrak m_r\end{smallmatrix}\right)$, with    $\mathfrak m_i\in \GL({n_i},\mathbb R).$  In fact, this construction works equally well if some or all of the $\phi_i$ are Eisenstein series.

 \end{definition}

\begin{definition}{\bf (Power function for a parabolic subgroup).}  \rm\label{ParabolicNorm}  Let $n\ge 2$ and $2\le r\le n$.  Fix a partition
$n = n_1+n_2 + \cdots +n_r$ with associated parabolic subgroup $\mathcal P := \mathcal P_{n_1,n_2, \ldots,n_r}.$   
Let $s = (s_1,s_2, \ldots, s_r) \in\mathbb C^r$ satisfy
$\sum\limits_{i=1}^r n_i s_i = 0.$

   For $g\in \mathcal P$, with diagonal block entries $\mathfrak m_i\in \GL(n_i,\mathbb R)$, we define the {\it power function}
 \vskip-10pt
 $$| g  |_{_{\mathcal P}}^s:=\prod_{i=1}^r\left| \text{\rm det}(\mathfrak m_i)\right|^{s_i}.$$Since $\det (\mathfrak m_i k_i)=\pm \det ( \mathfrak m_i )$ for $k_i\in{\rm O}(n_i,\R)$, we see that $| g  |_{_{\mathcal P}}^s$ is invariant under right multiplication by ${\rm O}(n,\R)$.  Because of this, and the fact that $\sum\limits_{i=1}^r n_i s_i = 0$, it follows that $| g  |_{_{\mathcal P}}^s$ extends to a well-defined function on $\GL(n,\R)$, invariant by the center and right multiplication by ${\rm O}(n,\R)$.
 \end{definition}

\begin{definition}{\bf (\boldmath $\rho$-function for a parabolic).} \label{rhofunction} Let $n\ge 2$ and $2\le r\le n$.  Let $\mathcal P$ be the parabolic subgroup $\mathcal P := \mathcal P_{n_1,n_2, \ldots,n_r}.$ Then we define\begin{equation*}
\rho_{_{\mathcal P}}(j):=\left\{
                            \begin{array}{ll}
                              \frac{n-n_1}{2}, & j=1, \\
                              \frac{n-n_j}{2}-n_1-\cdots-n_{j-1}, & j\ge 2,
                            \end{array}
                          \right.
\end{equation*}and  $$\rho_{_{\mathcal P}}=(\rho_{_{\mathcal P}}(1),\rho_{_{\mathcal P}}(2),\ldots,\rho_{_{\mathcal P}}(r)).$$ \end{definition}

\begin{remark} \rm The $\rho$-function is introduced as a normalizing factor (a shift in the $s$ variable) in the definition of Eisenstein series below, to make later formulae as simple as possible. Note that, in the definition of Eisenstein series for  reductive groups,  $\rho_{_{\mathcal P}}$ equals half the sum of the roots of the parabolic subgroup. \end{remark}

 \begin{definition} {\bf  (\boldmath Langlands Eisenstein series).} \label{LangEisDef}
 Let $n\ge 2$ and $2\le r\le n$.  Let $\mathcal P$ be the parabolic subgroup $\mathcal P := \mathcal P_{n_1,n_2, \ldots,n_r}.$ Suppose $\Phi$ is a Maass form associated to $\mathcal P$,  as in Definition \ref{def:maassparabolic}.  Let $s = (s_1,s_2, \ldots, s_r) \in\mathbb C^r$ with 
$\sum\limits_{i=1}^r n_i s_i = 0$ and let $| g|^{s}_{\mathcal P}$ be the power function as in Definition \ref{ParabolicNorm}.  

For  $\Gamma_n:=\SL(n,\Z)$,   we define the {\it Eisenstein series} 
\begin{equation}E_{\mathcal P,\Phi}(g,s):= \sum_{\gamma\in (\Gamma_n\cap \mathcal P) \backslash \Gamma_n}\Phi(
\gamma g)\cdot |\gamma g|^{s+\rho_{_{\mathcal P}}}_{\mathcal P},\label{EisDef}\end{equation}
with $\rho_{_{\mathcal P}}$ as in Definition \ref{rhofunction}.\end{definition}

\begin{example} {\bf (Borel Eisenstein series).} \rm Consider the partition  $$n=\underbrace{1+1+\cdots+1}_{n\  {\rm times}},$$  associated to the Borel parabolic subgroup $$\mathcal B:= \mathcal P_{ 1,1,\ldots,1} := \left\{\left(\begin{matrix} * & * & \cdots &*\\
0 & * & \cdots & *\\
\vdots & \vdots & \ddots & \vdots \\
0 & 0 &\cdots & *\end{matrix}\right)\right\}.$$Then the Borel Eisenstein series is constructed as follows.  Let $\Phi=1$ be the trivial function, and choose $s=(s_1,s_2,\ldots, s_{n})$ where $\sum\limits_{i=1}^n s_i=0$. Then 
$\rho_{_{\mathcal B}}=\bigl(\frac{n-1}{2},\frac{n-3}{2},\ldots,\frac{1-n}{2}\bigr),$ and \eqref{EisDef} becomes
\begin{equation}E_{\mathcal B}(g,s)=\sum_{\gamma\in (\Gamma_n\cap \mathcal B) \backslash \Gamma_n}|\gamma g|^{s+\rho_{_{\mathcal B}}}_{\mathcal B}.\label{BorelDef}\end{equation}The Borel Eisenstein series for $\SL(2,\Z)$ and $\SL(3,\Z)$ are given in Examples \ref{sl2Borel} and \ref{sl3Borel}  respectively. \end{example}

\section{\large  \bf Whittaker function for Langlands Eisenstein series}

\begin{definition} {\bf (Jacquet's Whittaker function).}\rm\label{def:JacWhittFunction} For $n\geq 2$,  let $\alpha = (\alpha_1,\alpha_2, \;\ldots, \;\alpha_n)\in \C^n$ with $\sum\limits_{i=1}^n \al_i=0$; let $g\in \GL(n,\R)$.  We define the {\it completed    Whittaker function} $W_{\alpha}: \GL(n,\mathbb R)\big/\left(
  \text{O}(n,\mathbb R)\cdot \mathbb R^\times\right) \to \mathbb C$, {\it with Langlands parameter} $\alpha$,   by the integral\begin{equation}\label{Whitdef}W^{(n)}_\alpha(g)= \prod_{1\leq j< k \leq n} \frac{\Gamma\big(\frac{1 + \alpha_j - \alpha_k}{2}\big)}{\pi^{\frac{1+\alpha_j - \alpha_k}{2}} }\cdot \int\limits_{U_n(\mathbb R)}\big|w_{n} \cdot ug\big|^{s+\rho_{_{\mathcal B}}}_{\mathcal B} \,\overline{\psi_{1, 1,\ldots,   1}(u)} \, du,   \end{equation}where $w_n$ is the long element of the Weyl group for $\GL(n,\R)$, and $\lvert * \rvert_{\mathcal B}^s$ is the power function for the Borel $\mathcal{B}$ given in Definition~\ref{ParabolicNorm}.
This integral converges absolutely if $\re(\al_i-\al_{i+1})>0$ for $1\le i\le n-1$, has meromorphic continuation to all $\alpha\in \C^n$ satisfying $\sum\limits_{i=1}^n \al_i=0$, and is invariant under permutations of $\al_1,\al_2,\ldots,\al_n$  (cf.  \cite[\S3]{s3}).
 \end{definition}

\begin{propositionnt} \label{langlandsparamsforPhi} Let ${\mathcal P}:=\mathcal P_{n_1,n_2,\ldots n_r}$ be a parabolic subgroup of $\GL(n,\R)$, and let $\Phi=\phi_1\otimes \phi_2\otimes \cdots\otimes \phi_r$ be an automorphic form associated to $\mathcal P$. For $j=1,2,\ldots,r,$ let $ (\alpha_{j,1},\ldots,\alpha_{j,n_j})$ denote the Langlands parameter of $\phi_j.$ We adopt the convention that if $n_j=1$ then $\alpha_{j,1} = 0.$
Then the Langlands parameter of $E_{\mathcal P, \Phi}(g,s)$ (denoted $\alpha_{_{\mathcal P,\Phi}}(s)$) is
\begin{align*}
\alpha_{_{\mathcal P,\Phi}}(s) &=\bigg (\overbrace{\alpha_{1,1}+s_1, \;\ldots \;,\alpha_{1,n_1}+s_1}^{n_1 \;\,\text{\rm terms}}, \;\overbrace{\alpha_{2,1}+s_2, \;\ldots \;,\alpha_{2,n_2}+s_2}^{n_2 \;\,\text{\rm terms}},\\\nn&\hskip200pt
\ldots \quad
,\;\overbrace{\alpha_{r,1}+s_r, \;\ldots \;,\alpha_{r,n_r}+s_r}^{n_r \;\,\text{\rm terms}}\bigg). 
\end{align*}\end{propositionnt}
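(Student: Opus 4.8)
The plan is to work directly from Definition~\ref{AutLang}: I must show that $E_{\mathcal P,\Phi}(g,s)$ has the same eigenvalues, under the full algebra of $\GL(n,\R)$-invariant differential operators, as the Borel power function $|\ast|^{\alpha_{_{\mathcal P,\Phi}}(s)+\rho_{_{\mathcal B}}}_{\mathcal B}$. Since every such operator $D$ is invariant under left translation, applying $D$ to the defining sum \eqref{EisDef} commutes with the summation over $\gamma$ (termwise in the region of absolute convergence, then by meromorphic continuation), so that $DE_{\mathcal P,\Phi}(g,s)=\sum_{\gamma}(D\Psi)(\gamma g)$, where $\Psi(g):=\Phi(g)\,|g|^{s+\rho_{_{\mathcal P}}}_{\mathcal P}$ is the inducing seed. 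Consequently it suffices to prove that $\Psi$ is itself an eigenfunction of $Z(\mathfrak{gl}_n)$ and to identify its eigenvalues with those of the asserted Borel power function; the eigenvalue relation then passes term-by-term to $E_{\mathcal P,\Phi}$.

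First I would record that, via the Iwasawa/Levi decomposition $g=u\mathfrak m k$, the seed factors through the Levi as $\Psi(u\mathfrak m k)=\prod_{j=1}^r \phi_j(\mathfrak m_j)\,|\det\mathfrak m_j|^{s_j+\rho_{_{\mathcal P}}(j)}$. Each $\phi_j$ is, by hypothesis, a $\GL(n_j,\R)$-eigenfunction with Langlands parameter $(\alpha_{j,1},\dots,\alpha_{j,n_j})$; that is, by Definition~\ref{AutLang} applied on the $j$-th block, it shares its $\GL(n_j)$-eigenvalues with the block Borel power function carrying parameter $(\alpha_{j,1},\dots,\alpha_{j,n_j})+\rho_{\mathcal B_{n_j}}$. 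The key structural input is then the compatibility of the Harish-Chandra homomorphism with parabolic induction: the action of $Z(\mathfrak{gl}_n)$ on a function induced from $\mathcal P$ factors through $\bigotimes_j Z(\mathfrak{gl}_{n_j})$ after a shift by $\rho_{_{\mathcal P}}$. This allows me to replace, without changing any $\GL(n)$-eigenvalue, each cusp form $\phi_j$ by the block Borel power function with its own parameter, so that the eigenvalues of $\Psi$ coincide with those of a single full $\GL(n)$ Borel power function whose parameter in the $i$-th slot of block $j$ is $\alpha_{j,i}+s_j+\rho_{_{\mathcal P}}(j)+\rho_{\mathcal B_{n_j}}(i)$.

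It then remains to check the combinatorial identity that glues the parabolic and block shifts into the full Borel shift. Writing a position as $p=n_1+\cdots+n_{j-1}+i$ with $1\le i\le n_j$, a direct computation from Definition~\ref{rhofunction} and \eqref{PsubB} gives
\[
\rho_{_{\mathcal P}}(j)+\rho_{\mathcal B_{n_j}}(i)=\Big(\tfrac{n-n_j}{2}-(n_1+\cdots+n_{j-1})\Big)+\Big(\tfrac{n_j+1}{2}-i\Big)=\tfrac{n+1}{2}-p=\rho_{_{\mathcal B}}(p).
\]
Hence the full Borel parameter of $\Psi$ at position $p$ equals $\alpha_{j,i}+s_j+\rho_{_{\mathcal B}}(p)$, so its Langlands parameter (Borel parameter minus $\rho_{_{\mathcal B}}$) is exactly the concatenation $(\dots,\alpha_{j,i}+s_j,\dots)$ claimed; by the reduction of the first paragraph this is also the Langlands parameter of $E_{\mathcal P,\Phi}(g,s)$.

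I expect the main obstacle to be the second step: justifying rigorously that the $\GL(n,\R)$-invariant operators act on the parabolically induced seed purely through the Levi together with precisely the $\rho_{_{\mathcal P}}$-shift, equivalently that $\Psi$ is genuinely a $Z(\mathfrak{gl}_n)$-eigenfunction and that substituting each $\phi_j$ by a block power function preserves every eigenvalue. In the concrete $\GL(n)$ framework this is the content of the compatibility of the Harish-Chandra isomorphism with standard parabolics, which I would either cite from \cite{s5} or verify directly on the power-function model. By contrast, the reduction to the seed and the $\rho$-bookkeeping of the third paragraph are routine.
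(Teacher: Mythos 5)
Your proof is correct and takes essentially the same route as the paper: both arguments reduce the eigenvalue computation to the inducing seed $\Phi(g)\,|g|^{s+\rho_{\mathcal P}}_{\mathcal P}$, replace each $\phi_j$ by the block Borel power function having the same eigenvalues, and finish with exactly the $\rho$-bookkeeping identity $\rho_{\mathcal P}(j)+\rho_{\mathcal B_{n_j}}(i)=\rho_{\mathcal B}(p)$, which the paper packages as $\rho_{\Phi}+\rho^{*}_{\mathcal P}=\rho_{\mathcal B}$. The only distinction is explicitness: the paper asserts the eigenvalue matches ``by construction,'' while you spell out the termwise differentiation under invariant operators and the Harish-Chandra/parabolic-induction compatibility that justify them.
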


\begin{proof}
Let $\mathcal{B}$ be the Borel parabolic subgroup of $\GL(n)$.  We need to show that $E_{\mathcal{P},\Phi}(g,s)$ has the same eigenvalues as the power function $\big| * \big|^{\alpha_{\mathcal{P},\Phi}(s)+\rho_{_B}}_{\mathcal B}$.

By construction, the Eisenstein series $E_{\mathcal P,1}(g,s)$,  where the ``1'' in the subscript denotes the constant function 1,  has the same eigenvalues as the power function $\big| * \big|_{\mathcal P}^{s+\rho_{_{\mathcal P}}}$. If we define $s^*+\rho_{_{\mathcal P}}^*\in\C^r$ by
 \begin{align*}
    s^*+\rho_{_{\mathcal P}}^*&:=\biggl(\overbrace{s_1+\rho_{_{\mathcal P}}(1),\ldots,s_1+\rho_{_{\mathcal P}}(1)}^{n_1\ {\rm terms}},\;\overbrace{s_2+\rho_{_{\mathcal P}}(2),\ldots,s_2+\rho_{_{\mathcal P}}(2)}^{n_2\ {\rm terms}} ,\\\nn&\hskip200pt \ldots\quad,\;\overbrace{s_r+\rho_{_{\mathcal P}}(r),\ldots,s_r+\rho_{_{\mathcal P}}(r)}^{n_r\ {\rm terms}}\biggr),
    \end{align*}
then $\big| * \big|_{\mathcal P}^{s+\rho_{_{\mathcal P}}}=\big| * \big|_{\mathcal B}^{s^*+\rho_{_{\mathcal P}}^*}$.  To see this, note that it suffices to check this equality on diagonal matrices ${\rm diag}(d_1,d_2,\ldots,d_n)$.  This can be verified by a simple calculation.

Let $\mathcal B_j$ be the Borel parabolic subgroup of $\GL(n_j)$.    Let $(\al_{j,1},\al_{j,2},\ldots,\al_{j,n_j})$ be the Langlands parameter of $\phi_j$ (that is, $\phi_j$ has the same eigenvalues as $\big|*\big|^{\al_j+\rho_{_{\mathcal B_j}}}_{\mathcal B_j}$).  The Langlands parameter of $\Phi=\phi_1\otimes \phi_2\otimes \cdots\otimes \phi_r$ is  $$\al_{\Phi}=\biggl(\overbrace{\al_{1,1},\ldots,\al_{1,n_1}}^{n_1 \ {\rm terms}},\;\overbrace{\al_{2,1},\ldots,\al_{2,n_2}}^{n_2 \ {\rm terms}} ,\quad \ldots\quad,\: \overbrace{\al_{r,1},\ldots,\al_{r,n_r}}^{n_r \ {\rm terms}}\biggr).$$
Hence $\Phi$ has the same eigenvalues as $\big|*\big|^{\al_{\Phi}+\rho_{_\Phi }}_{\mathcal B}$, where 
 $$\rho_{_\Phi}=\biggl(\overbrace{\tfrac{n_1-1}{2},\tfrac{n_1-3}{2},\ldots,\tfrac{1-n_1}{2}}^{\rho_{_{\mathcal B_1}}},\;\overbrace{\tfrac{n_2-1}{2},\tfrac{n_2-3}{2},\ldots,\tfrac{1-n_2}{2}}^{\rho_{_{\mathcal B_2}}} ,\quad\ldots \quad,\; \overbrace{\tfrac{n_r-1}{2},\tfrac{n_r-3}{2},\ldots,\tfrac{1-n_r}{2}}^{\rho_{_{\mathcal B_r}}}\biggr).$$
The eigenvalues of $E_{\mathcal P, \Phi}(*,s)$ match those of $\big|*\big|^{\al_{\Phi}+\rho_{_\Phi}}_{\mathcal B}\cdot \big|*\big|^{s^*+\rho_{_{\mathcal P}}^*}_{\mathcal B}$.  It therefore suffices to show that
\begin{equation}\label{macthexp}\al_{\Phi}+\rho_{_\Phi}+s^*+\rho_{_{\mathcal P}}^*=\al_{\mathcal P,\Phi}(s)+\rho_{_{\mathcal B}}.\end{equation}This is equivalent to the identity $\rho_{_\Phi}+\rho_{_{\mathcal P}}^*=\rho_{_{\mathcal B}}$, which is immediate from the definitions.
\end{proof}

\section{\large \bf Statement and proof of the Main Theorem of this paper}
\label{MainTheoremSection}

\begin{proposition} {\bf  (The \boldmath $M^{\rm{th}}$ Fourier coefficient of   $E_{\mathcal P, \Phi}$).}\label{MthEisCoeff}
 Let $$s = (s_1,s_2, \ldots,s_r)\in\mathbb C^r,$$  where $\sum\limits_{i=1}^r n_is_i =0.$
Consider  $E_{\mathcal P, \Phi}(*,s)$ with associated Langlands parameters $\alpha_{_{\mathcal P,\Phi}}(s)$ as defined in Proposition \ref{langlandsparamsforPhi}.  Let $M = (m_1,m_2,\ldots, m_{n-1}) \in \mathbb Z_{>0}^{n-1}$.  Then the $M^{th}$ term in the Fourier-Whittaker expansion of $E_{\mathcal P, \Phi}$ is
\begin{align*}
\int\limits_{0}^1\cdots\int\limits_{0}^1  E_{\mathcal P, \Phi}(ug, s)\, {\rm exp}\left(-2\pi i \sum\limits_{i=1}^{n-1}  m_{i} u_{i,i+1}\right)\prod_{1\le i<j\le n} du_{i,j}\; = \; \frac{{A_{\mathcal P, \Phi}}(M,s)}
{\prod\limits_{k=1}^{n-1}  m_k^{k(n-k)/2}} \; W_{\alpha_{_{\mathcal P,\Phi}}(s)}\big(M g\big),
\end{align*}
where $A_{{\mathcal P, \Phi}}(M,s) = A_{{\mathcal P, \Phi}}\big((1,\ldots,1),s\big) \cdot \lambda_{{\mathcal P, \Phi}}(M,s),$
and
 \begin{align}\label{HeckeCoeff}
    \lambda_{{\mathcal P,\Phi}}\big((m,1,\ldots,1),s\big) & = \hskip-10pt\underset {c_1c_2\cdots c_r = m } {\sum_{    c_1, c_2, \ldots, c_r \, \in \, \mathbb Z_{>0}}} \hskip-10pt \lambda_{\phi_1}(c_1)  \cdots \lambda_{\phi_r}(c_r)
    \cdot c_1^{s_1}  \cdots c_r^{s_r}
\end{align}
 is the $(m,1,\ldots,1)^{th}$ (or more informally  the $m^{th}$) Hecke eigenvalue of $E_{\mathcal P,\Phi}$.
 
 Moreover, suppose $\phi_j$ has Langlands parameter
  $(\alpha_{j,1},\ldots,\alpha_{j,n_j})$, with the convention that if $n_j=1$ then $\alpha_{j,1}=0.$  We also assume that each $\phi_j$ is normalized to have Petersson norm $\langle \phi_j, \phi_j\rangle = 1.$
 Then the first coefficient of $E_{\mathcal P, \Phi}$ is given by
 \begin{align*} 
 & A_{{\mathcal P, \Phi}}\big((1,\ldots,1),s\big) =  \underset{n_k\ne1}{\prod_{k=1}^r} L^*\big(1, \text{\rm Ad}\; \phi_k\big)^{-\frac12 }\hskip-4pt
  \prod_{1\le j<\ell\le r}   L^*\big(1+s_j-s_{\ell}, \;\phi_j\times\phi_{\ell}\big)^{-1}
 \end{align*} 
 up to a non-zero constant factor with absolute value depending  only on $n$. 
Here
$$L^*(1,\,\text{\rm Ad}\;\phi_k) = L(1,\,\text{\rm Ad}\;\phi_k) \prod_{1\le i \ne j\le n_k}  \Gamma\left(\frac{1+\alpha_{k,i}-\alpha_{k,j}}{2}   \right)$$
and
$$L^*(1+s_j-s_\ell, \;\phi_j\times\phi_\ell) = \begin{cases} L^*(1+s_j-s_\ell,\, \phi_j) & \text{if}\; n_\ell =1 \;\text{and}\;n_j\ne 1,\\
L^*(1+s_j-s_\ell,\, \phi_\ell) & \text{if}\; n_j=1 \;\text{and}\;n_\ell\ne 1,\\
\zeta^*(1+s_j-s_\ell) & \text{if}\; n_j=n_\ell=1. 
\end{cases}
$$
Otherwise, $L^*(1+s_j-s_\ell, \phi_j\times\phi_\ell)$ is the completed Rankin-Selberg L-function.
\end{proposition}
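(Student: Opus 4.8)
The plan is to split the statement into three layers and treat them separately: (i) the structural assertion that the $M^{\text{th}}$ coefficient is a single scalar multiple of the Jacquet Whittaker function $W_{\alpha_{_{\mathcal P,\Phi}}(s)}$; (ii) the Hecke multiplicativity $A_{\mathcal P,\Phi}(M,s)=A_{\mathcal P,\Phi}\big((1,\ldots,1),s\big)\cdot\lambda_{\mathcal P,\Phi}(M,s)$ together with the explicit divisor-sum formula \eqref{HeckeCoeff}; and (iii) the closed form for the first coefficient $A_{\mathcal P,\Phi}\big((1,\ldots,1),s\big)$.

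For (i), I would start from Proposition \ref{langlandsparamsforPhi}: the series $E_{\mathcal P,\Phi}(\cdot,s)$ is left $\SL(n,\Z)$-invariant and is a joint eigenfunction of the invariant differential operators with Langlands parameter $\alpha_{_{\mathcal P,\Phi}}(s)$, so it possesses a Fourier-Whittaker expansion whose nondegenerate terms are Whittaker functionals for that parameter. Uniqueness of the Whittaker model then forces each such term to be a scalar multiple of the completed Jacquet Whittaker function of Definition \ref{def:JacWhittFunction}, evaluated at $Mg$ (the diagonal matrix attached to $M$ is exactly the element whose conjugation action carries $\psi_{1,\ldots,1}$ to $\psi_M$). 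The normalizing denominator $\prod_{k=1}^{n-1}m_k^{k(n-k)/2}$ is nothing but the value of the Borel power function $|\cdot|^{\rho_{_{\mathcal B}}}_{\mathcal B}$ at that diagonal matrix, and it is pinned down by the homogeneity of the Jacquet integral; this simultaneously defines $A_{\mathcal P,\Phi}(M,s)$. Alternatively one obtains the same shape by directly unfolding \eqref{EisDef} against $\overline{\psi_M}$ and collapsing the Bruhat cells, as in \cite{s3}.

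For (ii), I would use that each $\phi_j$ is a Hecke eigenform and that the Eisenstein construction commutes with the $\GL(n)$ Hecke algebra, so that $E_{\mathcal P,\Phi}$ is itself a simultaneous Hecke eigenform; the standard Hecke relations then reduce $A_{\mathcal P,\Phi}(M,s)$ to the first coefficient times the eigenvalues $\lambda_{\mathcal P,\Phi}(M,s)$. To get \eqref{HeckeCoeff} explicitly, I would identify the standard $L$-function: since by Proposition \ref{langlandsparamsforPhi} the Langlands parameter of $E_{\mathcal P,\Phi}$ is the concatenation of the shifted tuples $\alpha_{j,i}+s_j$, its Satake parameters at each prime are the union of the shifted Satake parameters of the $\phi_j$, whence $\sum_m \lambda_{\mathcal P,\Phi}\big((m,1,\ldots,1),s\big)\,m^{-w}=\prod_{j=1}^r L(w-s_j,\phi_j)$. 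Expanding this product of Dirichlet series and reading off the coefficient of $m^{-w}$ produces precisely the convolution $\sum_{c_1\cdots c_r=m}\prod_{j}\lambda_{\phi_j}(c_j)\,c_j^{s_j}$.

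For (iii) I would invoke \cite{s6}, where the first coefficient is evaluated: the factors $L^*(1+s_j-s_\ell,\phi_j\times\phi_\ell)^{-1}$ come from the Gindikin-Karpelevich evaluation of the constant-term intertwining operators acting on the inducing data, while the factors $L^*(1,\Ad\phi_k)^{-1/2}$ come from the Petersson normalization $\langle\phi_k,\phi_k\rangle=1$ via the Rankin-Selberg identity relating the norm of a cusp form to its adjoint $L$-value at $1$; the ambiguity is exactly the stated nonzero constant of absolute value depending only on $n$. The main obstacle is this last layer: steps (i) and (ii) are formal consequences of multiplicity one and Hecke theory, whereas (iii) carries essentially all of the analytic content, requiring the full constant-term computation and careful bookkeeping of the archimedean Gamma-factors and the global normalizing constants.
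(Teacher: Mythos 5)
Your proposal is correct in outline, but you should know that the paper's entire proof of this proposition is one sentence: ``The proof of this proposition is given in \cite[Theorem 4.12]{s6}'' --- that is, every layer, not just your layer (iii), is deferred to that reference. What you do differently is reconstruct the formal skeleton yourself: multiplicity one for moderate-growth Whittaker functions, combined with Proposition \ref{langlandsparamsforPhi}, gives the shape of the $M^{\rm th}$ coefficient as a scalar times $W_{\alpha_{_{\mathcal P,\Phi}}(s)}(Mg)$ (and your identification of the denominator $\prod_k m_k^{k(n-k)/2}$ with $\big|M\big|^{\rho_{_{\mathcal B}}}_{\mathcal B}$ is correct); Hecke-eigenform theory plus the factorization $\sum_m \lambda_{\mathcal P,\Phi}\big((m,1,\ldots,1),s\big)\,m^{-w}=\prod_{j} L(w-s_j,\phi_j)$ gives \eqref{HeckeCoeff}; and only the first-coefficient evaluation is cited. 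This buys a presentation that isolates where the analytic content actually sits --- as you say, entirely in $A_{\mathcal P,\Phi}\big((1,\ldots,1),s\big)$ --- whereas the paper's citation buys brevity. Two cautions on your scaffolding. First, the finite-place statement that the Satake parameters of $E_{\mathcal P,\Phi}$ are the shifted union of those of the $\phi_j$ does \emph{not} follow from Proposition \ref{langlandsparamsforPhi}, which concerns only the archimedean parameter (eigenvalues of invariant differential operators); it requires the separate, standard unfolding of the Hecke action on the induced data, so that step needs its own citation or argument. Second, the Rankin--Selberg factors $L^*(1+s_j-s_\ell,\phi_j\times\phi_\ell)^{-1}$ in a \emph{non-constant} Whittaker coefficient come from a Casselman--Shalika/Shahidi-type computation (realized in \cite{s6} via the template method of \cite{s3}), not literally from the Gindikin--Karpelevich constant-term formula; the two are closely related but your attribution conflates them. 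Neither point is fatal, since both facts are standard and are in any case subsumed by the reference you (and the paper) invoke.
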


\begin{proof}
The proof of this proposition is given in \cite[Theorem 4.12]{s6}. \end{proof} 

  \begin{theorem} {\bf (Main Theorem).} \label{maintheorem}  \it Let ${\mathcal P}:=\mathcal P_{n_1,n_2,\ldots n_r}$ be a parabolic subgroup of $\GL(n,\R)$, and let $\Phi=\phi_1\otimes \phi_2\otimes \cdots\otimes \phi_r$ be an automorphic form associated to $\mathcal P$. For $j=1,2,\ldots,r,$ let $ (\alpha_{j,1},\ldots,\alpha_{j,n_j})$ denote the Langlands parameter of $\phi_j.$ We adopt the convention that if $n_j=1$ then $\alpha_{j,1} = 0.$

Let   $E_{\mathcal P,\Phi}(g,s)$ be as in Definition \ref{LangEisDef}.  Define
   \begin{align*}E^*_{\mathcal P ,\Phi}(g,s)&:=\left(\,\prod_{1\le j<\ell\le r}   L^*\big(1+s_j-s_{\ell}, \;\phi_j\times\phi_{\ell}\big)\right)E_{\mathcal P ,\Phi}(g,s) ,\end{align*}
where  $s=(s_1,s_2,\ldots s_r)\in\C^r$ satisfies $\sum\limits_{j=1}^r n_j s_j=0$.       Then the $M=(m,1,\ldots,1)^{\rm th}$ Fourier-Whittaker coefficient of $E^*_{\mathcal P ,\Phi}(g,s)$, defined by
$$FW_{{\mathcal P, \Phi}}(g,M,s) :=\int\limits_{0}^1\cdots\int\limits_{0}^1  E^*_{\mathcal P, \Phi}(ug, s)\, {\rm exp}\left(-2\pi i \sum\limits_{i=1}^{n-1}  m_{i} u_{i,i+1}\right)\prod_{1\le i<j\le n} du_{i,j},$$where $m_1=m$ and $m_i=1$ for $2\le i\le n-1$,   satisfies the  functional equations
      $$FW_{{\mathcal P, \Phi}}(g,M,s)=FW_{{\sigma\mathcal P, \sigma\Phi}}(g,M,\sigma s)
      $$for any $\sigma\in S_r$.  Here, the action of $\sigma$ on $\mathcal P$, $\Phi$, and $s$ is   given by    
      \begin{equation} \sigma \mathcal  P:= \mathcal P_{n_{\sigma(1)},n_{\sigma(2)},\ldots,n_{\sigma(r)}},\quad
  \sigma \Phi:= \phi_{\sigma(1)}\otimes\phi_{\sigma(2)}\otimes \cdots \phi_{\sigma(r)},\quad
\sigma s:= \big(s_{\sigma(1)},s_{\sigma(2)},\ldots,s_{\sigma(r)}\big).\label{sigma-action}\end{equation}

   \end{theorem}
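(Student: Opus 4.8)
The plan is to combine the explicit formula for the $(m,1,\ldots,1)^{\rm th}$ Fourier--Whittaker coefficient from Proposition \ref{MthEisCoeff} with the permutation invariance of the Whittaker function, exactly as in the small-rank examples already worked out. First I would write out $FW_{\mathcal P,\Phi}(g,M,s)$ explicitly. By Proposition \ref{MthEisCoeff}, the unnormalized coefficient is
$$
\frac{A_{\mathcal P,\Phi}\big((1,\ldots,1),s\big)\cdot \lambda_{\mathcal P,\Phi}\big((m,1,\ldots,1),s\big)}{m^{(n-1)/2}}\; W_{\alpha_{_{\mathcal P,\Phi}}(s)}(Mg),
$$
using $m_1=m$, $m_i=1$ for $i\ge 2$, so that $\prod_k m_k^{k(n-k)/2}=m^{(n-1)/2}$. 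Multiplying by the normalizing factor $\prod_{j<\ell}L^*(1+s_j-s_\ell,\phi_j\times\phi_\ell)$ that defines $E^*_{\mathcal P,\Phi}$, and invoking the explicit value of $A_{\mathcal P,\Phi}((1,\ldots,1),s)$ from Proposition \ref{MthEisCoeff}, the entire Rankin--Selberg product cancels. What survives is
$$
FW_{\mathcal P,\Phi}(g,M,s)=\frac{c_n\,\lambda_{\mathcal P,\Phi}\big((m,1,\ldots,1),s\big)}{m^{(n-1)/2}\,\prod_{k:\,n_k\ne1}L^*(1,\Ad\phi_k)^{1/2}}\; W_{\alpha_{_{\mathcal P,\Phi}}(s)}(Mg),
$$
where $c_n$ is the nonzero constant of absolute value depending only on $n$.

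Next I would check that each of the three surviving factors is invariant under replacing $(\mathcal P,\Phi,s)$ by $(\sigma\mathcal P,\sigma\Phi,\sigma s)$. The constant $c_n$ and the power $m^{(n-1)/2}$ depend only on $n$ and $m$, hence are untouched. The adjoint product $\prod_{n_k\ne1}L^*(1,\Ad\phi_k)^{1/2}$ is a product over the factors of $\Phi$ and is manifestly unchanged when the $\phi_k$ are permuted. For the divisor sum $\lambda_{\mathcal P,\Phi}((m,1,\ldots,1),s)$, formula \eqref{HeckeCoeff} expresses it as $\sum_{c_1\cdots c_r=m}\prod_k\lambda_{\phi_k}(c_k)\,c_k^{s_k}$, a sum over ordered factorizations; applying $\sigma$ simultaneously to the indices of the $\phi_k$, the $s_k$, and the summation variables $c_k$ leaves the sum invariant, since it is symmetric in the paired data $(\phi_k,s_k)$. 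Thus $\lambda_{\sigma\mathcal P,\sigma\Phi}((m,1,\ldots,1),\sigma s)=\lambda_{\mathcal P,\Phi}((m,1,\ldots,1),s)$.

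The genuinely substantive point, and the main thing to verify carefully, is the invariance of the Whittaker factor $W_{\alpha_{_{\mathcal P,\Phi}}(s)}(Mg)$. By Proposition \ref{langlandsparamsforPhi}, the Langlands parameter $\alpha_{_{\mathcal P,\Phi}}(s)$ is the concatenation of the blocks $(\alpha_{j,1}+s_j,\ldots,\alpha_{j,n_j}+s_j)$ for $j=1,\ldots,r$. Replacing $(\mathcal P,\Phi,s)$ by $(\sigma\mathcal P,\sigma\Phi,\sigma s)$ reorders these $r$ blocks according to $\sigma$, producing the concatenation with the $\sigma(j)$-th block in position $j$. This is precisely a permutation of the $n$ entries of $\alpha_{_{\mathcal P,\Phi}}(s)$ — namely the block permutation of $S_n$ induced by $\sigma\in S_r$. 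Since, as recorded in Definition \ref{def:JacWhittFunction}, the Whittaker function $W^{(n)}_\alpha$ is invariant under arbitrary permutations of its Langlands parameters $\alpha_1,\ldots,\alpha_n$, we conclude $W_{\alpha_{_{\sigma\mathcal P,\sigma\Phi}}(\sigma s)}=W_{\alpha_{_{\mathcal P,\Phi}}(s)}$, so this factor matches as well.

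Assembling these three invariances gives $FW_{\mathcal P,\Phi}(g,M,s)=FW_{\sigma\mathcal P,\sigma\Phi}(g,M,\sigma s)$, which is the claimed functional equation. I expect the only delicate step to be the bookkeeping in the Whittaker factor: one must confirm that the reordering of the parameter blocks induced by $\sigma\in S_r$ is genuinely realized by a single element of $S_n$ acting on the $n$ coordinates, so that the full permutation-invariance of $W^{(n)}_\alpha$ applies; the remaining factors are symmetric essentially by inspection.
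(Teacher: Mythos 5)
Your proposal is correct and follows essentially the same route as the paper: invoke Proposition \ref{MthEisCoeff} so that the Rankin--Selberg product in the first coefficient cancels against the normalizing factor defining $E^*_{\mathcal P,\Phi}$, and then check that the three surviving pieces (the adjoint-$L$ product, the divisor sum \eqref{HeckeCoeff}, and the Whittaker factor) are each invariant, the last via Proposition \ref{langlandsparamsforPhi} and the permutation invariance of $W^{(n)}_\alpha$. Your write-up is in fact slightly more explicit than the paper's, which simply declares the first two invariances ``clear'' and cites \cite[\S5.9]{s5} for the Whittaker function.
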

   
   \begin{proof}By Proposition \ref{MthEisCoeff}, it suffices to show that each of the following three expressions is invariant under the action of any $\sigma\in S_r$:
   $$  \underset{n_k\ne1}{\prod_{k=1}^r} L^*\big(1, \text{\rm Ad}\; \phi_k\big)^{-\frac12 },\quad \underset {c_1c_2\cdots c_r = m } {\sum_{    c_1, c_2, \ldots, c_r \, \in \, \mathbb Z_{>0}}} \hskip-10pt \lambda_{\phi_1}(c_1)  \cdots \lambda_{\phi_r}(c_r)
    \cdot c_1^{s_1}  \cdots c_r^{s_r},\quad W_{\alpha_{_{\mathcal P,\Phi}}(s)}\big(M g\big).$$   The first two of these expressions clearly satisfy these invariances. 
Moreover, by Proposition \ref{langlandsparamsforPhi}, the above action of $\sigma$ amounts to a certain permutation of the coordinates of the Langlands parameter $\alpha_{_{\mathcal P,\Phi}}(s)$.  It is well-known (\cite[\S5.9]{s5}) that the Whittaker function $W_{\alpha_{_{\mathcal P,\Phi}}(s)}$ is invariant under such permutations.\end{proof}

\begin{corollary}  Suppose $\sigma\in S_r$ satisfies $\sigma \mathcal P=\mathcal P$ and $\sigma \Phi=\Phi$.  Then $$FW_{\mathcal P,\Phi}(g,M,s)=FW_{\mathcal P,\Phi}(g,M,\sigma s).$$\end{corollary}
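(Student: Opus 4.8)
The plan is to derive this corollary as an immediate specialization of the Main Theorem \ref{maintheorem}. The Main Theorem asserts the functional equation
$$FW_{\mathcal P,\Phi}(g,M,s) = FW_{\sigma\mathcal P,\sigma\Phi}(g,M,\sigma s)$$
for every $\sigma\in S_r$, where $\sigma\mathcal P$, $\sigma\Phi$, and $\sigma s$ are given by the action \eqref{sigma-action}. The corollary simply observes what happens when the permutation $\sigma$ happens to stabilize both the partition data and the automorphic datum.

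First I would invoke the Main Theorem for the given $\sigma$. Then I would substitute the two hypotheses $\sigma\mathcal P=\mathcal P$ and $\sigma\Phi=\Phi$ directly into the right-hand side of the displayed functional equation. Under these identifications, the subscripts $\sigma\mathcal P$ and $\sigma\Phi$ become $\mathcal P$ and $\Phi$ respectively, so $FW_{\sigma\mathcal P,\sigma\Phi}(g,M,\sigma s)$ collapses to $FW_{\mathcal P,\Phi}(g,M,\sigma s)$. Combining this with the left-hand side of the Main Theorem yields
$$FW_{\mathcal P,\Phi}(g,M,s) = FW_{\mathcal P,\Phi}(g,M,\sigma s),$$
which is exactly the assertion of the corollary.

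There is essentially no obstacle here, since the corollary is a direct logical consequence. The one conceptual point worth noting, which I would mention briefly, is the meaning of the hypotheses: the condition $\sigma\mathcal P=\mathcal P$ forces $n_{\sigma(j)}=n_j$ for all $j$, so $\sigma$ permutes only blocks of equal size, and the condition $\sigma\Phi=\Phi$ forces $\phi_{\sigma(j)}=\phi_j$ for all $j$, so the corresponding cusp forms agree. Thus the relevant $\sigma$ lie in the stabilizer of the pair $(\mathcal P,\Phi)$ inside $S_r$, and for such $\sigma$ the Eisenstein series itself is genuinely invariant under $s\mapsto\sigma s$ at the level of its $(m,1,\ldots,1)$ Fourier-Whittaker coefficient. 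In the extreme case $\phi_1=\cdots=\phi_r$ with all $n_j$ equal, every $\sigma\in S_r$ satisfies the hypotheses, recovering the full symmetric-group invariance alluded to in the uniqueness discussion of \S\ref{UniquenessSection}.
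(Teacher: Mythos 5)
Your proposal is correct and matches the paper's own argument: the paper's proof is simply ``This is immediate from our Main Theorem,'' and you have spelled out exactly that specialization, substituting the hypotheses $\sigma\mathcal P=\mathcal P$ and $\sigma\Phi=\Phi$ into the functional equation of Theorem \ref{maintheorem}. Nothing further is needed.
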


\begin{proof} This is immediate from our Main Theorem.\end{proof}

\begin{examplent}\rm The Borel Eisenstein series satisfies 
$FW_{\mathcal B}(g,M,s)=FW_{\mathcal B}(g,M,\sigma s)$ for any $\sigma\in S_n$.\end{examplent}

\begin{examplent}\rm If $\Phi=\phi_1\otimes \phi_2\otimes\cdots\otimes \phi_r$ and $\phi_k=\phi_j$ for some $1\le k\ne j\le r$, and $\sigma$ is the transposition that interchanges $k$ and $j$, then 
$FW_{\mathcal P,\Phi}(g,M,s)=FW_{\mathcal P, \Phi}(g,M,\sigma s).$\end{examplent}

\begin{theorem} {\bf (Functional equations of \boldmath{$E^*_{\mathcal P,\Phi}(g,s)$}).}\it \label{EisFunctionalEquations}
 Suppose $\sigma\in S_r$ acts on $\mathcal P$, $\Phi$, and $s$ according to \eqref{sigma-action}.
  Then $$\boxed{E^*_{\mathcal P,\Phi}(g,s)=E^*_{\sigma\mathcal P,\sigma\Phi}(g,\sigma s),}$$
  for all $g\in\GL(n,\R).$\end{theorem}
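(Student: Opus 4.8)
The plan is to promote the coefficient-level identity of Theorem~\ref{maintheorem} to an identity of the two automorphic forms themselves. Write $F_1(g):=E^*_{\mathcal P,\Phi}(g,s)$ and $F_2(g):=E^*_{\sigma\mathcal P,\sigma\Phi}(g,\sigma s)$; the goal is to prove $F_1=F_2$ by comparing their full Fourier--Whittaker expansions. First I would record that $F_1$ and $F_2$ are both $\SL(n,\Z)$-invariant, of moderate growth, and eigenfunctions of the ring of $\GL(n,\R)$-invariant differential operators with the \emph{same} eigenvalues: by Proposition~\ref{langlandsparamsforPhi} the parameter $\alpha_{_{\sigma\mathcal P,\sigma\Phi}}(\sigma s)$ is a permutation of the coordinates of $\alpha_{_{\mathcal P,\Phi}}(s)$, and the eigenvalues depend only on this parameter as an unordered tuple. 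In particular the Jacquet--Whittaker function $W_{\alpha_{_{\mathcal P,\Phi}}(s)}$ appearing in Proposition~\ref{MthEisCoeff} is literally the same for both series, being invariant under permutations of its Langlands parameter (\cite[\S5.9]{s5}).

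Next I would match \emph{every} generic Fourier--Whittaker coefficient, i.e.\ $FW_{\mathcal P,\Phi}(g,M,s)$ for all $M=(m_1,\dots,m_{n-1})\in\Z_{>0}^{n-1}$, not merely for $M=(m,1,\dots,1)$. By Proposition~\ref{MthEisCoeff} this coefficient equals $A_{\mathcal P,\Phi}\big((1,\dots,1),s\big)\,\lambda_{\mathcal P,\Phi}(M,s)\,W_{\alpha_{_{\mathcal P,\Phi}}(s)}(Mg)$ divided by $\prod_k m_k^{k(n-k)/2}$. After the normalising Rankin--Selberg factors cancel, the first coefficient of $E^*_{\mathcal P,\Phi}$ reduces to $\prod_{n_k\ne1}L^*(1,\Ad\phi_k)^{-1/2}$ up to a constant depending only on $n$, which is manifestly invariant under the index permutation $\sigma$. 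The values $\lambda_{\mathcal P,\Phi}\big((m,1,\dots,1),s\big)$ are $\sigma$-invariant by the explicit formula~\eqref{HeckeCoeff}, and the general $\lambda_{\mathcal P,\Phi}(M,s)$ inherit this invariance because $E_{\mathcal P,\Phi}$ is a Hecke eigenform, so each $\lambda_{\mathcal P,\Phi}(M,s)$ is a fixed symmetric expression in the Satake parameters read off from the multiset $\alpha_{_{\mathcal P,\Phi}}(s)$, which $\sigma$ leaves unchanged. Thus $FW_{\mathcal P,\Phi}(g,M,s)=FW_{\sigma\mathcal P,\sigma\Phi}(g,M,\sigma s)$ for every $M\in\Z_{>0}^{n-1}$, so the difference $D:=F_1-F_2$ has all of its generic Fourier--Whittaker coefficients equal to zero.

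The main obstacle is to pass from the vanishing of the generic coefficients of $D$ to $D\equiv0$. Since the coefficient in Proposition~\ref{MthEisCoeff} carries the \emph{generic} $\GL(n)$ Whittaker function, the Piatetski-Shapiro--Shalika theory reconstructs $D$ from its generic coefficients together with the terms supported on proper parabolic subgroups; all the remaining content therefore lies in the constant terms of $D$. I would treat these in direct analogy with the rank-one model of Example~\ref{sl2Borel}: the constant term of $E^*_{\mathcal P,\Phi}$ along each parabolic is an explicit finite sum of ratios of completed Rankin--Selberg $L$-functions times lower-rank Whittaker data, and its invariance under $\sigma$ would follow from the functional equations of those completed $L$-functions, just as the functional equation of $\zeta^*$ settled the $\SL(2,\Z)$ constant term. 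Granting this, $D$ has vanishing constant terms along every proper parabolic and is hence cuspidal; but a cusp form on $\GL(n)$ is determined by its generic Whittaker coefficients, which here vanish, forcing $D\equiv0$ and $F_1=F_2$. For general $\mathcal P$ the needed constant-term functional equation is exactly what Langlands establishes in \cite{s9},\cite{s11} through the standard intertwining operators and their Gindikin--Karpelevich normalisations, so the theorem follows; carrying out this final step directly from the explicit constant terms is precisely the remaining ingredient flagged in the introduction.
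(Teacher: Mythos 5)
Your proposal attempts a genuinely different — and far more ambitious — route than the paper. The paper's own proof of Theorem \ref{EisFunctionalEquations} is essentially a citation: the functional equations were proved by Langlands \cite{s9}, \cite{s11}, and the authors merely verify that Langlands' functional equations are consistent with the coefficient computations of Theorem \ref{maintheorem}. What you are proposing is the full Maass-style argument (reconstruct the Eisenstein series from its Fourier--Whittaker coefficients and transfer their symmetries to the series itself), which is exactly the program the introduction describes as conjectural, and your write-up runs into the two gaps that the paper itself flags as the obstruction.

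First, meromorphic continuation is never addressed. The series \eqref{EisDef} defining $E_{\mathcal P,\Phi}(g,s)$ converges only in a cone, roughly $\re(s_j-s_\ell)$ large for $j<\ell$, and for nontrivial $\sigma$ the point $\sigma s$ lies outside that cone whenever $s$ lies inside it. Hence $F_1(g)=E^*_{\mathcal P,\Phi}(g,s)$ and $F_2(g)=E^*_{\sigma\mathcal P,\sigma\Phi}(g,\sigma s)$ have no common domain of definition as convergent series, and "comparing their Fourier expansions" at a single $s$ presupposes precisely the analytic continuation that is the hard part of the theorem; Proposition \ref{MthEisCoeff} is likewise an identity in the region of convergence only. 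Second, the constant-term step is circular. Your argument needs the constant terms of $D=F_1-F_2$ along \emph{every} proper parabolic to vanish, so that $D$ is cuspidal and then killed by the vanishing of its generic Whittaker coefficients. You "grant" this and attribute it to Langlands' intertwining-operator theory; but in Langlands' framework the functional equations of the intertwining operators and constant terms are established together with, and are essentially equivalent to, the functional equation of the Eisenstein series itself — the very statement being proved. So the argument either collapses to the paper's one-line citation or begs the question. (The genuinely new pieces you add — extending the coefficient matching from $M=(m,1,\ldots,1)$ to all $M$ via the Hecke eigenform property and permutation-invariance of the Satake data, and the cuspidality criterion for $D$ — are sound in outline, but note also that completing the constant-term computation honestly would involve Rankin--Selberg functional equations relating $\phi_j$ to its contragredient, a point your sketch glosses over.)
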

\begin{proof} These functional equations were proved  by Langlands (see \cite{s9}, \cite{s11}).  It is easy to see that every Fourier-Whittaker coefficient of $E^*_{\mathcal P,\Phi}(g,s)$ has to have the same functional equation. We have checked  that the functional equations found by Langlands match the functional equations of the Fourier-Whittaker coefficients given in Theorem \ref{maintheorem}. 
\end{proof}

\section{ \large  \bf Uniqueness of functional equations for Langlands Eisenstein series} \label{UniquenessSection}

\begin{conjecture} \label{conj:UniqueFE}
Let $n\geq 2$.  Suppose that $\mathcal{P}=\mathcal P_{n_1,n_2,\ldots,n_r}$ with $n=n_1+n_2+\cdots+n_r$ with $r\geq 2$ and $\Phi=\phi_1\otimes \phi_2 \otimes \cdots \otimes \phi_r$ with each $\phi_j$ a Maass form for $\SL(n_j,\Z)$ if $n_j\geq 2$ and $\phi_j$ is the constant function one if $n_j=1$.  Let $\sigma'\in S_r$.  Suppose $E^*_{\mathcal P,\Phi}(g,s)$ satisfies a functional equation of the form
\begin{equation}\label{eq:generalUniqueFE}
 E^*_{\mathcal P,\Phi}(g,s) = E^*_{\sigma'\mathcal P,\sigma'\Phi}(g,\mu(s))
\end{equation}
for some affine transformation 
\begin{equation}\label{eq:UniqueFE}
 \mu\left( \left[ \begin{smallmatrix} s_1 \\ s_2 \\ \vdots \\ s_r \end{smallmatrix}\right] \right) = \left[ \begin{smallmatrix} a_{11} & a_{12} & \cdots & a_{1r} \\ a_{21} & a_{22} & \cdots & a_{2r} \\ \vdots & & & \\ a_{11} & a_{12} & \cdots & a_{r1} \end{smallmatrix}\right] \left[ \begin{smallmatrix} s_1 \\ s_2 \\ \vdots \\ s_r \end{smallmatrix} \right] + \left[\begin{smallmatrix} b_1 \\ b_2 \\ \vdots \\ b_r \end{smallmatrix} \right],
\end{equation}
where $a_{ij},b_i\in \R$ for all $1\leq i,j\leq r$.  Then, in fact, $\mu=\sigma$ for some $\sigma\in S_r$ for which $\sigma \mathcal{P} = \sigma'\mathcal{P}$ and $\sigma \Phi = \sigma'\Phi$.
\end{conjecture}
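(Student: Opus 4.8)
The plan is to read off the rigidity of $\mu$ from the explicit $(m,1,\dots,1)$ Fourier--Whittaker coefficients of Proposition~\ref{MthEisCoeff}. Applying the hypothesized functional equation \eqref{eq:generalUniqueFE} and extracting the $(m,1,\dots,1)$ coefficient of each side gives, after canceling the power $m^{(n-1)/2}$, the adjoint factors $\prod_{n_k\ne1}L^*(1,\Ad\phi_k)^{-\frac12}$ (which are unchanged by permuting the blocks), and the nonzero constant of Proposition~\ref{MthEisCoeff}, the identity
$$\lambda_{\mathcal P,\Phi}\big((m,1,\dots,1),s\big)\,W_{\alpha_{_{\mathcal P,\Phi}}(s)}(Mg)=\lambda_{\sigma'\mathcal P,\sigma'\Phi}\big((m,1,\dots,1),\mu(s)\big)\,W_{\alpha_{_{\sigma'\mathcal P,\sigma'\Phi}}(\mu(s))}(Mg)$$
for all $g$ and all $m\ge1$. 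Taking $m=1$, where both Hecke eigenvalues equal $1$ and $Mg=g$, yields a proportionality $W_{\alpha_{_{\mathcal P,\Phi}}(s)}=cW_{\alpha_{_{\sigma'\mathcal P,\sigma'\Phi}}(\mu(s))}$ on $\GL(n,\R)$. Since Jacquet Whittaker functions whose Langlands parameters are not permutations of one another have different torus asymptotics and hence are linearly independent, the two parameters must agree as multisets; by the permutation-invariance of $W$ noted in the excerpt the functions are then literally equal (so $c=1$). Feeding this back, the coefficient identity collapses to $\lambda_{\mathcal P,\Phi}\big((m,1,\dots,1),s\big)=\lambda_{\sigma'\mathcal P,\sigma'\Phi}\big((m,1,\dots,1),\mu(s)\big)$ for every $m$.

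Next I would encode these divisor sums \eqref{HeckeCoeff} as Dirichlet series: summing against $m^{-w}$ factors the left side as $\prod_{j=1}^r L(w-s_j,\phi_j)$ and the right side as $\prod_{j=1}^r L(w-\mu(s)_j,\phi_{\sigma'(j)})$, so the Hecke-eigenvalue identity becomes
$$\prod_{j=1}^r L(w-s_j,\phi_j)=\prod_{j=1}^r L\big(w-\mu(s)_j,\phi_{\sigma'(j)}\big).$$
The heart of the matter is a unique-factorization statement: a product of shifted standard $L$-functions of $\GL(n_j)$-cusp forms (with $\zeta$ in the $n_j=1$ slots) determines the multiset of pairs $(\phi_j,s_j)$. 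For the trivial blocks this is read off from the pole of $\zeta(w-s_j)$ at $w=1+s_j$; for cuspidal blocks it is the assertion that the isobaric datum is recovered from the Euler product, i.e.\ strong multiplicity one together with the classification of isobaric sums. Granting it, for each admissible $s$ there is a permutation $\tau=\tau_s$ with $\phi_j=\phi_{\sigma'(\tau(j))}$ and $s_j=\mu(s)_{\tau(j)}$ for all $j$; the form-matching condition confines $\tau_s$ to the fixed subset $\Pi:=\{\tau\in S_r:\phi_j=\phi_{\sigma'(\tau(j))}\ \forall j\}$.

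Finally I would make $\tau$ uniform in $s$ and eliminate the translation part. Writing $H=\{s:\sum_i n_i s_i=0\}$ and $C_\tau=\{s\in H:\mu(s)_{\tau(j)}=s_j\ \forall j\}$ for $\tau\in\Pi$, the previous step gives $H=\bigcup_{\tau\in\Pi}C_\tau$, a finite union of affine subspaces. Since a positive-dimensional affine space over $\C$ is never a finite union of proper affine subspaces, some $C_\tau$ equals all of $H$. Then each affine function $(As+b)_{\tau(j)}-s_j$ vanishes on the linear hyperplane $H$; evaluating at $s=0\in H$ forces $b_{\tau(j)}=0$, hence $b=0$, and the linear part then gives $\mu(s)_{\tau(j)}=s_j$ throughout $H$. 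Thus $\mu$ acts on $H$ as the permutation $\sigma:=\tau^{-1}$, and $\tau\in\Pi$ translates precisely into $\sigma\Phi=\sigma'\Phi$ and $\sigma\mathcal P=\sigma'\mathcal P$, as desired.

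The step I expect to be the true obstacle is the unique-factorization (rigidity) of products of shifted $L$-functions in full generality. Recovering pole locations disposes of the all-trivial (Borel) and single nontrivial block cases, and assuming $\mu$ linear makes the geometry of the last paragraph transparent by homogeneity (this is the content of Propositions~\ref{prop:OneBlock} and \ref{prop:IfMuIsLinear}); but when several distinct cusp forms occur and the shifts $s_j$ have nonzero real part---so that the shifted $L$-functions fall outside the Selberg class---separating the factors requires the full strength of strong multiplicity one and the isobaric classification for $\GL(n)$. Establishing this within the elementary Fourier-coefficient framework of the paper is exactly what is needed to promote Conjecture~\ref{conj:UniqueFE} from the cases treated here to its general form.
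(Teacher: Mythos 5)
First, a framing point: the statement you set out to prove is a \emph{conjecture} in the paper. The authors do not prove it in general; they establish only two special cases, Proposition \ref{prop:OneBlock} (all $\phi_j$ equal) and Proposition \ref{prop:IfMuIsLinear} ($\mu$ linear, and conditional on the unproved Conjecture \ref{distinct}), both routed through Lemma \ref{lem:UniqueFEiffDivSum} and then a real-asymptotic argument (let $s_i\to\pm\infty$ and compare leading terms, plus pigeonhole) applied to the divisor-sum identity \eqref{eq:DivisorSum} at a \emph{single} well-chosen prime. Your opening reduction parallels their Lemma: they cancel the Whittaker factor by choosing $g=\diag(p^{-1},1,\ldots,1)$, you cancel it by linear independence of Whittaker functions with non-permutation-equivalent parameters (equivalently, and more cleanly, by the fact that $W_\alpha$ determines the infinitesimal character, hence $\alpha$ up to the Weyl group). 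After that your route genuinely diverges: you retain the Hecke identity for \emph{all} $m$, package it as an equality of products of shifted standard $L$-functions, appeal to unique factorization of such products, and finish with an affine-geometry step ($H$ is an irreducible affine variety, hence not a finite union of proper affine subspaces, and evaluation at $s=0$ kills the translation vector $b$). That last step is correct and is exactly what lets you drop the paper's linearity hypothesis on $\mu$; the all-primes viewpoint is what lets you avoid Conjecture \ref{distinct}, which demands simultaneous distinctness of Hecke eigenvalues at one prime.

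The gap is the one you name: the unique-factorization (rigidity) statement is nowhere proved in your proposal, so as written your argument is conditional, just as Proposition \ref{prop:IfMuIsLinear} is conditional on Conjecture \ref{distinct}. However, you somewhat overstate its inaccessibility. Equality of all Dirichlet coefficients forces equality of every unramified Euler factor, so the isobaric sums $\boxplus_j\, \phi_j|\det|^{s_j}$ and $\boxplus_j\, \phi_{\sigma'(j)}|\det|^{\mu(s)_j}$ have isomorphic local components at every finite place; Jacquet--Shalika's strong multiplicity one theorem for isobaric representations together with the uniqueness of cuspidal data --- precisely the method of \cite{s7}, which the paper itself invokes in its remark on Conjecture \ref{distinct} for $N=2$ --- then gives the multiset matching of pairs $(\phi_j,s_j)$. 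Two points need care in executing this. First, the classification a priori yields $\phi_j\cong\phi_{\sigma'(\tau(j))}\otimes|\det|^{i\theta}$ only up to a unitary determinant twist; here the paper's normalization (each $\phi_j$ is invariant under the center of $\GL(n_j,\R)$, i.e., has trivial central character) forces $\theta=0$ by comparing central characters. Second, your quick disposal of the trivial blocks via the pole of $\zeta(w-s_j)$ at $w=1+s_j$ is not airtight pointwise in $s$ (that pole can be cancelled by a zero of a cuspidal factor for special $s$), but this is harmless since the isobaric argument treats all blocks uniformly. So: completed by a careful citation of \cite{s7} plus the central-character observation, your outline would prove the full conjecture, which is strictly more than the paper establishes; as submitted, with the key rigidity step only identified rather than proved, it remains an incomplete (conditional) argument.
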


\begin{remark}\rm
There is no loss in generality in assuming that $\Phi$ is in the form 
\begin{equation}\label{eq:UniqueFEiffDivSum}
 \Phi = \overbrace{\phi_1\otimes \phi_1 \otimes \cdots \otimes \phi_1}^{r_1\ {\rm \tiny times}}
 \ \otimes \ 
 \overbrace{\phi_2\otimes \phi_2 \otimes \cdots \otimes \phi_2}^{r_2\ {\rm \tiny times}}
 \ \otimes
\ \cdots\ \otimes \ 
 \overbrace{\phi_N\otimes \phi_N \otimes \cdots \otimes \phi_N}^{r_N\ {\rm \tiny times}},
\end{equation}
where $\phi_i=\phi_j$ if and only if $i=j$, and that  (\ref{eq:generalUniqueFE}) holds with  $\sigma'$ equaling  the identity element.  Indeed, if $\Phi$ is not of this form, then there exists some element $\sigma_0\in S_r$ such that $\sigma_0 \Phi$ is of the form given on the right hand side of (\ref{eq:UniqueFEiffDivSum}).  Then, since Theorem~\ref{EisFunctionalEquations} implies that
 \[ E^*_{\mathcal P,\Phi}(g,s) = E^*_{\sigma_0\mathcal P,\sigma_0\Phi}(g, \sigma_0  s), \]
we have that 
\begin{equation*}
 E^*_{\mathcal P,\Phi}(g,s) = E^*_{\sigma'\mathcal P,\sigma'\Phi}(g,\mu(s)) 
\end{equation*}
if and only if
\begin{equation}\label{EisEqualities} E^*_{\sigma_0\mathcal P,\sigma_0 \Phi}(g,s) = E^*_{\mathcal P,\Phi}(g,   {\sigma_0^{-1} s}) = E^*_{\sigma'\mathcal P,\sigma'\Phi}(g, {\mu(\sigma_0^{-1} s)}) = E^*_{\sigma_0\mathcal P,\sigma_0\Phi}(g,\mu'(s)), \end{equation}
where $\mu'(s) = \sigma_0\,(\sigma')^{-1}\mu\big(\sigma_0s\big)$.  (The third equality in \eqref{EisEqualities} is by Theorem~\ref{EisFunctionalEquations} with $\sigma_0(\sigma')^{-1}$ in place of $\sigma$ and $\mu(\sigma_0^{-1}s)$ in place of $s$.)

 Since $\mu$ is a permutation sending $\Phi$ to $\sigma'\Phi$ if and only if $\mu'$ is a permutation that acts trivially on $\sigma_0\Phi$, we see that \eqref{eq:generalUniqueFE} holds if and only if
 \[ E^*_{\sigma_0\mathcal P,\sigma_0\Phi}(g,s) = E^*_{\sigma_0\mathcal P,\sigma_0\Phi}(g,\mu'(s)), \]
where $\mu'$ is as in \eqref{eq:UniqueFE}.  Finally, the conclusion of the conjecture, i.e., that $\mu$ is a permutation which sends $\Phi$ to $\sigma' \Phi$, means that $\mu'$ fixes $\sigma_0\Phi$.  Note that this implies that $\mu'$ must be of the form $\sigma_1\times \sigma_2 \times \cdots \times \sigma_N$, where each $\sigma_i \in S_{r_i}$ acts on the tensor   product of $r_i$ copies of $\phi_i$.

In the proofs of the special cases given in Propositions \ref{prop:OneBlock} and \ref{prop:IfMuIsLinear} given below, we will assume that we are in this special case: $\Phi$ as in \eqref{eq:UniqueFEiffDivSum} and $\sigma'$ trivial.
\end{remark}

We give the following  Propositions \ref{prop:OneBlock},
\ref{prop:IfMuIsLinear}, as evidence for Conjecture~\ref{conj:UniqueFE}.

\begin{propositionnt}\label{prop:OneBlock}
Conjecture~\ref{conj:UniqueFE} holds in the special case that $\phi_1=\phi_2=\cdots =\phi_r$ is the same automorphic form $\phi$ with $r\geq 2$.  (Note: In this case, every permutation $\sigma \in S_r$ has the property that $\sigma \mathcal{P} = \mathcal{P}$ and $\sigma \Phi = \Phi$.)
\end{propositionnt}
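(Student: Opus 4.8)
The plan is to convert the hypothesized functional equation into a concrete combinatorial constraint on $\mu$ — namely that $\mu$ preserves the multiset of coordinates of $s$ — and then to show by elementary linear algebra that a real affine map with this property is a coordinate permutation. By the preceding Remark I may assume $\sigma'$ is the identity, so the hypothesis reads $E^*_{\mathcal P,\Phi}(g,s)=E^*_{\mathcal P,\Phi}(g,\mu(s))$. First I would apply Proposition \ref{MthEisCoeff}: integrating both sides against $\exp(-2\pi i(m u_{1,2}+u_{2,3}+\cdots+u_{n-1,n}))$ shows that for $M=(m,1,\ldots,1)$ the Fourier–Whittaker coefficient is $\mu$-invariant. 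Since the prefactor $\prod_{n_k\ne1}L^*(1,\Ad\phi_k)^{-1/2}$ and the power of $m$ are independent of $s$, they cancel, leaving
\[
\lambda_{\mathcal P,\Phi}\big((m,1,\ldots,1),s\big)\,W_{\alpha_{_{\mathcal P,\Phi}}(s)}(Mg)=\lambda_{\mathcal P,\Phi}\big((m,1,\ldots,1),\mu(s)\big)\,W_{\alpha_{_{\mathcal P,\Phi}}(\mu(s))}(Mg).
\]

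The next step is to decouple the arithmetic factor (the Hecke/divisor sum) from the archimedean factor (the Whittaker function). Fixing a generic $s$ and viewing the displayed identity as an equality of functions of $g$, I would invoke the linear independence of Whittaker functions attached to Langlands parameters in distinct $S_n$-orbits (they are distinguished by the leading $y^{w(\alpha)+\rho}$ terms of their torus expansions). As the divisor sums are nonzero for generic $s$, this forces $\alpha_{_{\mathcal P,\Phi}}(\mu(s))$ to be a permutation of $\alpha_{_{\mathcal P,\Phi}}(s)$; the Whittaker factors then cancel, giving $\lambda_{\mathcal P,\Phi}((m,1,\ldots,1),s)=\lambda_{\mathcal P,\Phi}((m,1,\ldots,1),\mu(s))$ for all $m$ and, by analytic continuation, all $s$. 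Taking $m=p$ prime and using $\phi_1=\cdots=\phi_r=\phi$, the divisor sum \eqref{HeckeCoeff} collapses to $\lambda_\phi(p)\sum_{j=1}^r p^{s_j}$, so $\sum_{j=1}^r p^{s_j}=\sum_{j=1}^r p^{(\mu(s))_j}$ for every prime $p$ with $\lambda_\phi(p)\ne0$, of which there are infinitely many. Restricting to real $s$ on the hyperplane $H=\{\sum_j s_j=0\}$ (legitimate because $\mu$ is real and the common block size $n_j=d$ makes $\sum_j n_j s_j=0$ read $\sum_j s_j=0$), the real exponentials $p\mapsto p^{a}$ for distinct $a$ are linearly independent — compare leading terms as $p\to\infty$ — so I conclude $\{(\mu(s))_j\}_{j=1}^r=\{s_j\}_{j=1}^r$ as multisets, for every $s\in H$.

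It then remains to prove the linear-algebraic claim that a real affine self-map of $H$ which preserves the coordinate-multiset everywhere on $H$ is a coordinate permutation. Writing $\mu(s)=As+b$, evaluating at $s=0$ gives $\{b_j\}=\{0,\ldots,0\}$, hence $b=0$, so $\mu$ is linear; and $\sum_j(\mu(s))_j=\sum_j s_j=0$ shows $\mu(H)\subseteq H$. Extending $\mu$ to $\widetilde\mu$ on $\R^r$ by $\widetilde\mu(\mathbf 1)=\mathbf 1$ with $\mathbf 1=(1,\ldots,1)$, multiset-preservation persists on all of $\R^r$ because adding $c\mathbf 1$ shifts every coordinate by $c$; evaluating at a standard basis vector $e_i$ then forces the $i$-th column $\widetilde\mu e_i$ to have coordinate-multiset $\{1,0,\ldots,0\}$, i.e. to be a standard basis vector $e_{\pi(i)}$, while evaluating at $e_i+e_{i'}$ excludes $\pi(i)=\pi(i')$. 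Thus $\pi\in S_r$ and $\widetilde\mu$ is its permutation matrix, so $\mu=\sigma$ with $\sigma=\pi$; since all blocks and all $\phi_j$ coincide, this $\sigma$ automatically satisfies $\sigma\mathcal P=\mathcal P$ and $\sigma\Phi=\Phi$, as required.

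I expect the main obstacle to be the separation step in the second paragraph: rigorously justifying that equality of the full Fourier–Whittaker coefficients forces separately the equality of the Whittaker parameters up to permutation and of the divisor sums. This rests on the linear independence of Whittaker functions with inequivalent Langlands parameters together with the generic non-vanishing of the divisor sums (so the Whittaker factors may be canceled); once this decoupling is in place, both the prime-by-prime analysis and the affine-to-permutation endgame are entirely elementary.
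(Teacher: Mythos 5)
Your proposal is correct in substance and shares the paper's skeleton---pass from the hypothesized functional equation to the $(m,1,\ldots,1)$ Fourier--Whittaker coefficients, extract an identity of divisor sums, and show that this identity forces $\mu$ to be a coordinate permutation---but both halves are executed differently. For the decoupling, the paper (Lemma \ref{lem:UniqueFEiffDivSum}) simply evaluates at $g=\diag(p^{-1},1,\ldots,1)$, so that the Whittaker factor is independent of $p$ and can be discarded, whereas you invoke linear independence of completed Whittaker functions whose Langlands parameters lie in distinct Weyl orbits. Your route uses heavier machinery, but it is arguably tighter: the paper's implicit cancellation of the $s$-dependent values $W_{\alpha(s)}$ against $W_{\alpha(\mu(s))}$ at the identity is exactly what your linear-independence step makes precise. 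For the divisor-sum step, the paper fixes a single prime $p$ with $\lambda(p)\neq 0$ and lets $s_1\to\pm\infty$, concluding by pigeonhole; you instead fix a real $s$ on the hyperplane and vary $p$, using linear independence of the power functions $p\mapsto p^{a}$ to get the multiset equality $\{\mu_j(s)\}=\{s_j\}$ at every point, followed by a clean linear-algebra endgame (extend $\mu$ by $\widetilde\mu(\mathbf 1)=\mathbf 1$, evaluate at $e_i$ and $e_i+e_{i'}$). Your endgame is more robust than the paper's asymptotic sketch.

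The one place where you assume strictly more than the paper: you need \emph{infinitely many} primes with $\lambda_\phi(p)\neq 0$, while the paper needs only \emph{one}. For $\phi$ on $\GL(2)$ (or $\phi\equiv 1$) this infinitude is known, but for Maass cusp forms on $\GL(d)$ with $d\geq 3$ it is not a routine fact: current bounds toward Ramanujan are too weak for the standard Rankin--Selberg positivity argument to rule out vanishing of $\lambda_\phi(p)$ at (almost) all primes, and indeed the paper itself appeals to Conjecture \ref{distinct} for nonvanishing statements of exactly this flavor. The repair is easy and stays inside your framework: fix one prime $p$ with $\lambda_\phi(p)\neq 0$, and replace the pointwise multiset comparison by linear independence, as functions on the real hyperplane $H$, of the exponentials $s\mapsto p^{\ell(s)}$ attached to distinct affine functionals $\ell$. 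Since the restrictions $s_1|_H,\ldots,s_r|_H$ are pairwise distinct, the identity $\sum_j p^{s_j}=\sum_j p^{\mu_j(s)}$ on $H$ forces $\{\mu_j|_H\}=\{s_j|_H\}$ as a multiset of affine functionals, i.e.\ $\mu$ is a permutation outright, and the endgame becomes unnecessary. (To be fair, the paper's own proof also silently assumes the existence of at least one prime with $\lambda(p)\neq 0$, so this caveat is a difference in degree, not in kind.)
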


\begin{conjecture}\label{distinct} Fix an integer $N\ge 2.$  Assume $\ell_1,\ell_2,\ldots,\ell_N$ are arbitrary integers greater than one. Let $\eta_1,\eta_2,\ldots, \eta_N$ be distinct Maass forms where each $\eta_j$ is a Maass form for $\SL(\ell_j,\mathbb Z)$ with associated $n^{th}$  Hecke eigenvalue  $\lambda_j(n)$. Then there exists a prime $p$ such that $\lambda_1(p),\lambda_2(p),\ldots, \lambda_N(p)$ are all distinct and non zero.
\end{conjecture}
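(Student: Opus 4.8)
The plan is to recast the statement as a density question about Hecke eigenvalues and to attack it with Rankin--Selberg theory reinforced by an equidistribution input. Writing $\lambda_j(p)$ for the $p$-th Hecke eigenvalue of $\eta_j$, I would call a prime $p$ \emph{bad} if it lies in
$$\mathscr{B}:=\bigcup_{j=1}^N\{p:\lambda_j(p)=0\}\ \cup\ \bigcup_{1\le i<j\le N}\{p:\lambda_i(p)=\lambda_j(p)\},$$
and \emph{good} otherwise. The conjecture asserts that $\mathscr{B}$ does not contain all primes, so it suffices to prove that $\mathscr{B}$ has density strictly less than $1$; I would in fact aim for density $0$, which makes the union over the finitely many components automatic.

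The analytic engine is the Rankin--Selberg theory of Jacquet--Piatetski-Shapiro--Shalika. First I would record that for each $j$ the convolution $L(s,\eta_j\times\widetilde{\eta}_j)$ has a simple pole at $s=1$ (this is exactly the statement that $L(s,\Ad\,\eta_j)$ is finite and nonzero there while the $\zeta(s)$-factor contributes the pole, as in the definition of $L(s,\Ad\,\phi)$ used above), whereas for $i\neq j$ the relation $\eta_i\not\cong\eta_j$ forces $L(s,\eta_i\times\widetilde{\eta}_j)$ to be holomorphic and nonzero at $s=1$. Taking logarithmic derivatives and applying a Tauberian theorem yields the second-moment asymptotics $\sum_{p\le x}|\lambda_j(p)|^2\sim\mathrm{Li}(x)$ together with the orthogonality relation $\sum_{p\le x}\lambda_i(p)\overline{\lambda_j(p)}=o(\mathrm{Li}(x))$ for $i\neq j$. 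Consequently $\sum_{p\le x}|\lambda_i(p)-\lambda_j(p)|^2\sim 2\,\mathrm{Li}(x)$, so $\lambda_i(p)\neq\lambda_j(p)$ on a set of primes of positive density; combining the second moment with the fourth-moment asymptotic $\sum_{p\le x}|\lambda_j(p)|^4\sim C_j\,\mathrm{Li}(x)$ (governed by the analytic behavior of the fourth-moment convolution) via Cauchy--Schwarz likewise forces $\lambda_j(p)\neq0$ on a set of primes of positive density. This resolves each individual condition.

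The hard part is \emph{simultaneity}: positive density for each of the $N+\binom{N}{2}$ good conditions does not, by a union bound, produce a single prime at which all of them hold, since the complementary bad densities can each be close to $1$. To overcome this I would invoke joint equidistribution (joint Sato--Tate) for the family $\eta_1,\dots,\eta_N$: the tuples of Satake parameters $\big(\{\alpha_{j,k}(p)\}_k\big)_{j=1}^N$ should equidistribute in the product of the associated Sato--Tate spaces with respect to the \emph{product} measure, precisely because the $\eta_j$ are pairwise non-isomorphic (and live, in part, on different groups $\SL(\ell_j,\Z)$). Granting this, the bad locus is the intersection with the proper closed subvariety cut out by $\prod_j\lambda_j\cdot\prod_{i<j}(\lambda_i-\lambda_j)=0$, which has measure zero for the absolutely continuous product measure; hence $\mathscr{B}$ has density $0$ and the good primes have density $1$, proving far more than the existence of a single one.

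I expect this last step to be the genuine obstacle, and it is the reason the statement is formulated as a conjecture. Joint Sato--Tate for general $\SL(\ell,\Z)$ Maass forms is not known: it would require the automorphy (or at least the expected analytic continuation and nonvanishing on $\re s=1$) of all the symmetric-power and mutual Rankin--Selberg $L$-functions attached to the $\eta_j$, which lies beyond current functoriality results. Absent this, the Rankin--Selberg moment computations above supply only the partial, positive-density versions of nonvanishing and distinctness; an unconditional resolution would follow either from the full Ramanujan conjecture coupled with a joint-distribution argument, or from a direct proof of the relevant joint equidistribution. I would therefore present the equidistribution route as the conceptual proof and the Rankin--Selberg moment method as the unconditional evidence, flagging simultaneity as the missing ingredient.
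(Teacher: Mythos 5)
The statement you were asked to prove is Conjecture \ref{distinct} of the paper: the authors do \emph{not} prove it, and the only accompanying material is a one-line remark that the case $N=2$ can be proved by the method introduced in \cite{s7} (Jacquet--Shalika). So your decision not to claim a complete proof is the right call, and your overall diagnosis matches the paper's: the pairwise statements are within reach of Rankin--Selberg theory, while the simultaneous statement for all $N$ forms is genuinely open. Your middle paragraph is in substance exactly the Jacquet--Shalika method the paper's remark points to --- the simple pole of $L(s,\eta_j\times\widetilde{\eta}_j)$ at $s=1$ against the holomorphy and nonvanishing of $L(s,\eta_i\times\widetilde{\eta}_j)$ for $\eta_i\not\cong\eta_j$, fed into moment computations over primes --- and your identification of joint equidistribution (joint Sato--Tate), with its dependence on unproven symmetric-power and higher Rankin--Selberg functoriality, as the missing ingredient for simultaneity is a fair account of why the statement is left as a conjecture.

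One correction of substance: your claim that the moment computations ``resolve each individual condition'' with \emph{positive density} is not unconditional. For general $\SL(\ell,\Z)$ Maass forms the Ramanujan conjecture is unknown, so the divergence of $\sum_p \lvert\lambda_i(p)-\lambda_j(p)\rvert^2 p^{-s}$ as $s\to 1^{+}$ yields only that $\lambda_i(p)\neq\lambda_j(p)$ for \emph{infinitely many} $p$, not for a positive proportion: absent a pointwise bound, the second moment could a priori be carried by a density-zero set of primes with large eigenvalues. Upgrading to positive density requires either Ramanujan or fourth-moment control, and the fourth-moment asymptotic you invoke for the nonvanishing condition is itself not known for general $\ell$; it sits inside the same web of unestablished functoriality as the joint Sato--Tate input you flag later. (Likewise, passing from the Rankin--Selberg Dirichlet series to sums over primes requires discarding prime-power terms, which does work via the known bounds toward Ramanujan of Jacquet--Shalika and Luo--Rudnick--Sarnak, but should be said.) None of this changes your conclusions --- for each pairwise condition one gets infinitude of good primes unconditionally, and for $N\ge 3$ the simultaneity problem is open --- but the ``unconditional evidence'' in your write-up is weaker than stated: infinitude rather than positive density, with the positive-density and fourth-moment assertions themselves conditional.
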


\begin{remark}\rm Conjecture \ref{distinct} can be proved in the case that $N = 2$ by the method introduced in \cite{s7}.
\end{remark}

\begin{propositionnt}\label{prop:IfMuIsLinear}
Assume Conjecture \ref{distinct}  and assume that $\mu$  given in \eqref{eq:UniqueFE} is linear, i.e., $b_i=0$ for each $i=1,2,\ldots,r.$ 
 Then Conjecture~\ref{conj:UniqueFE} holds.  
\end{propositionnt}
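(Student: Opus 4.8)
The plan is to test the hypothesized functional equation against the $(m,1,\ldots,1)^{\rm th}$ Fourier--Whittaker coefficients computed in Proposition~\ref{MthEisCoeff} and Theorem~\ref{maintheorem}. By the Remark following Conjecture~\ref{conj:UniqueFE} I may assume that $\Phi=\phi_1^{\otimes r_1}\otimes\cdots\otimes\phi_N^{\otimes r_N}$ with the $\phi_1,\ldots,\phi_N$ pairwise distinct, and that $\sigma'$ is the identity; the goal then becomes to show that $\mu$, restricted to the hyperplane $H=\{\,\sum_i n_i s_i=0\,\}$, is a permutation lying in $S_{r_1}\times\cdots\times S_{r_N}$ (which then fixes both $\Phi$ and $\mathcal P$). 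For a block $j\in\{1,\ldots,r\}$ let $g(j)\in\{1,\ldots,N\}$ record which distinct form $\phi_{g(j)}$ sits in block $j$, and write $D(m,s):=\lambda_{\mathcal P,\Phi}\big((m,1,\ldots,1),s\big)$ for the divisor sum in \eqref{HeckeCoeff}. The normalization defining $E^*_{\mathcal P,\Phi}$ cancels the factor $\prod_{j<\ell}L^*(1+s_j-s_\ell,\phi_j\times\phi_\ell)$ against the first coefficient, so (as in the proof of Theorem~\ref{maintheorem}) one has, for $M=(m,1,\ldots,1)$, $FW_{\mathcal P,\Phi}(g,M,s)=c\,\prod_k m_k^{-k(n-k)/2}\,D(m,s)\,W_{\alpha_{_{\mathcal P,\Phi}}(s)}(Mg)$ with $c$ an $s$-independent constant. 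Since $\sigma'$ is trivial, the assumed equation \eqref{eq:generalUniqueFE} reduces to $FW_{\mathcal P,\Phi}(g,M,s)=FW_{\mathcal P,\Phi}(g,M,\mu(s))$, i.e. to the identity $D(m,s)\,W_{\alpha_{_{\mathcal P,\Phi}}(s)}(Mg)=D(m,\mu(s))\,W_{\alpha_{_{\mathcal P,\Phi}}(\mu(s))}(Mg)$ for all $g$ and all $m$.

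First I would separate the archimedean and arithmetic data. Taking $m=1$, where $D(1,s)=1$, yields $W_{\alpha_{_{\mathcal P,\Phi}}(s)}=W_{\alpha_{_{\mathcal P,\Phi}}(\mu(s))}$ as functions on $\GL(n,\R)$. Because $W_\beta$ is a joint eigenfunction of the invariant differential operators whose eigenvalues are the symmetric functions of $\beta$, equality of these two Whittaker functions forces the Langlands parameters $\alpha_{_{\mathcal P,\Phi}}(\mu(s))$ and $\alpha_{_{\mathcal P,\Phi}}(s)$ to agree as multisets for every $s\in H$; call this statement (i). Feeding (i) back into the coefficient identity and cancelling the nonvanishing factor $W_{\alpha_{_{\mathcal P,\Phi}}(s)}$, the identity collapses to $D(m,s)=D(m,\mu(s))$ for all $m$ and all $s\in H$; call this statement (ii).

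Next I would use (i), together with the hypothesis that $\mu$ is linear, to show that $\mu|_H$ is an honest coordinate permutation. By Proposition~\ref{langlandsparamsforPhi} the entries of $\alpha_{_{\mathcal P,\Phi}}(s)$ are the affine-linear functions $\alpha_{j,k}+s_j$ and those of $\alpha_{_{\mathcal P,\Phi}}(\mu(s))$ are $\alpha_{j,k}+\mu(s)_j$, so (i) is exactly the equality of two multisets of affine functions of $s$, and there is a bijection matching them term-by-term on $H$. Comparing linear and constant parts separately -- here $b_i=0$ is essential, since it prevents the constants $\alpha_{j,k}$ from being absorbed by a shift -- each coordinate function $s\mapsto s_j$ must coincide on $H$ with the linear form $\mu(s)_{j'}$ for some $j'$. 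Now the $r$ coordinate functions remain pairwise distinct on $H$ (no difference $s_j-s_k$ is a scalar multiple of $\sum_i n_i s_i$, since all $n_i>0$), while there are at most $r$ distinct linear forms among $\mu(s)_1,\ldots,\mu(s)_r$; hence these forms are precisely a permutation of the coordinates, i.e. $\mu(s)_j=s_{\pi(j)}$ on $H$ for some $\pi\in S_r$. Finally, $\mu$ maps $H$ into $H$, so $\sum_k n_{\pi^{-1}(k)}s_k$ vanishes on $H$; being a multiple of $\sum_k n_k s_k$ with matching total $n$, this forces $n_{\pi(j)}=n_j$, so $\pi$ preserves block sizes.

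Finally I would invoke (ii) and Conjecture~\ref{distinct} to promote $\pi$ to a form-preserving permutation. Taking $m=p$ prime in (ii) and using \eqref{HeckeCoeff}, the only factorizations $c_1\cdots c_r=p$ have one $c_j=p$ and the rest equal to $1$, so (ii) becomes $\sum_{j}\lambda_{\phi_{g(j)}}(p)\,p^{s_j}=\sum_{j}\lambda_{\phi_{g(j)}}(p)\,p^{s_{\pi(j)}}$ on $H$. The characters $s\mapsto p^{s_k}$ are linearly independent on $H$ (again because the coordinate functions are distinct on $H$), so matching the coefficient of $p^{s_k}$ gives $\lambda_{\phi_{g(k)}}(p)=\lambda_{\phi_{g(\pi^{-1}(k))}}(p)$ for every $k$. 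Choosing the prime $p$ supplied by Conjecture~\ref{distinct}, for which $\lambda_{\phi_1}(p),\ldots,\lambda_{\phi_N}(p)$ are pairwise distinct and nonzero, forces $g(\pi^{-1}(k))=g(k)$; that is, $\pi$ preserves the grouping and so lies in $S_{r_1}\times\cdots\times S_{r_N}$. Such a $\pi$ fixes $\Phi$ and $\mathcal P$, whence $\mu=\pi=\sigma$ is the required permutation. I expect the third paragraph to be the main obstacle: one must make the multiset matching rigorous while working modulo the single relation $\sum_i n_i s_i=0$, and the linearity of $\mu$ is used in an essential way to keep the constants $\alpha_{j,k}$ from mixing. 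It is worth noting that a \emph{single} good prime from Conjecture~\ref{distinct} suffices precisely because that third step has already reduced $\mu|_H$ to a genuine coordinate permutation, so no cancellation among Hecke eigenvalues can occur; the conjecture is needed only to separate distinct forms that happen to share the same archimedean Langlands parameters, a coincidence the archimedean argument of (i) cannot detect.
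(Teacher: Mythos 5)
Your argument is correct, but it takes a genuinely different route from the paper's. The paper first proves Lemma~\ref{lem:UniqueFEiffDivSum}, which discards the Whittaker function altogether and reduces Conjecture~\ref{conj:UniqueFE} to the divisor-sum identity \eqref{eq:DivisorSum2}; its proof of Proposition~\ref{prop:IfMuIsLinear} is then purely asymptotic: fixing a prime $p$ (from Conjecture~\ref{distinct}) at which the $\lambda_j(p)$ are nonzero and pairwise distinct, it sends $s_i\to+\infty$ and matches the dominant growth of the two sides of \eqref{eq:DivisorSum2}; linearity of $\mu$ guarantees the matching exponent is exactly $s_i$ (no stray factor $p^{b}$, cf.\ the Remark following the Proposition), the distinctness of eigenvalues forces the matching term to lie in the same block $I_j$, and the pigeonhole principle finishes, exactly as in Proposition~\ref{prop:OneBlock}. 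You instead split the problem into an archimedean step and an arithmetic step: the $m=1$ coefficient gives $W_{\alpha_{\mathcal P,\Phi}(s)}=W_{\alpha_{\mathcal P,\Phi}(\mu(s))}$, hence multiset equality of Langlands parameters, and linearity (your use of $b_i=0$) converts this into the statement that $\mu|_H$ is a coordinate permutation $\pi\in S_r$; only afterwards do you invoke Conjecture~\ref{distinct}, at a single prime, together with linear independence of the characters $s\mapsto p^{s_k}$ on $H$, to force $\pi$ to preserve the blocks. So both proofs use the same two hypotheses essentially, but in different places and for different purposes. Your version buys a cleaner separation of roles -- the archimedean data alone rigidifies $\mu$ into a permutation, and the conjecture is needed only once, to distinguish forms that happen to share archimedean parameters -- and it replaces asymptotics by algebra (multiset matching of affine functions, Dedekind--Artin independence of characters); your explicit $m=1$ step also cleanly disposes of the ratio of Whittaker values that Lemma~\ref{lem:UniqueFEiffDivSum} glosses over. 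The cost is that you must invoke two facts about the completed Whittaker function that the paper's route never needs: that it is not identically zero, and that $W_{\beta}=W_{\beta'}$ forces $\beta'$ to be a permutation of $\beta$ (via the Harish-Chandra eigenvalue character). One step you should write out in full is the multiset-matching argument you flag yourself: since $H$ cannot be covered by finitely many proper affine subspaces, one fixed bijection with $s_j+\alpha_{j,k}=\mu_{j'}(s)+\alpha_{j',k'}$ must hold identically on $H$, and comparing linear and constant parts there is legitimate precisely because $H$ is a linear subspace through the origin and $b_{j'}=0$.
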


\begin{remark}\rm
The proof uses the fact that if $\phi_j\neq \phi_k$, then we can find $p$ such that $\lambda_j(p)$ and $\lambda_k(p)$ are nonzero and distinct.  If $\mu$ is not necessarily linear and $\Phi$ consists of at least two distinct automorphic forms, our proof breaks down if $\lambda_j(p)=p^b\lambda_k(p)$ for some value of $b$ independent of $p$.
\end{remark}

Before giving the proofs of these propositions, we give a Lemma which reduces the proof of Conjecture~\ref{conj:UniqueFE} to showing that a functional equation for the divisor sum puts strong restrictions on the affine transformation $\mu$.

\begin{lemma}\label{lem:UniqueFEiffDivSum}
Assume that there exist integers $r_1,r_2,\ldots,r_N$ for which $r=r_1+r_2+\cdots +r_N$ and $\Phi$ is as in \eqref{eq:UniqueFEiffDivSum}.  Set $\lambda_j = \lambda_{\phi_j}$, and write \[ \mu_j(s) = a_{j1}s_1+a_{j2}s_2+\cdots + a_{jr}s_r + b_j. \]
Conjecture~\ref{conj:UniqueFE} holds if the following is true: If $\mu$ is an affine transformation for which, setting $\hat{r}_i = \sum\limits_{j=1}^i r_j$,
\begin{equation}\label{eq:DivisorSum}
 \sum_{j=1}^N \lambda_j(p)\left( \sum_{i=\hat{r}_{j-1}+1}^{\hat{r}_j} p^{s_i}\right) =  \sum_{j=1}^N \lambda_j(p)\left( \sum_{i=\hat{r}_{j-1}+1}^{\hat{r}_j} p^{\mu_i(s)}\right)
\end{equation}
is true for all $s=(s_1,s_2,\ldots,s_r) \in \C^r$ satisfying $\sum\limits_{j=1}^r n_js_j=0$ and all primes $p$, then it must be the case that $\mu=\sigma\in S_r$ is of the form $\sigma=\sigma_1\times \sigma_2\times \cdots \times \sigma_N$ for which $\sigma_j \in S_{r_j}$ permutes the $j$-th block of $r_j$ forms $\phi_j \otimes \phi_j \otimes \cdots \otimes \phi_j$.
\end{lemma}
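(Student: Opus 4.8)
\emph{Plan.} The strategy is to compare a single Fourier--Whittaker coefficient of the two sides of the assumed functional equation and to show that, after cancelling all $s$-independent factors and all Whittaker factors, what remains is exactly the divisor-sum identity \eqref{eq:DivisorSum}. By the Remark following Conjecture~\ref{conj:UniqueFE}, there is no loss in assuming that $\Phi$ has the block form \eqref{eq:UniqueFEiffDivSum} and that $\sigma'$ is the identity, so the hypothesis \eqref{eq:generalUniqueFE} reads $E^*_{\mathcal P,\Phi}(g,s)=E^*_{\mathcal P,\Phi}(g,\mu(s))$ for all $g\in\GL(n,\R)$ and all $s$ with $\sum_j n_j s_j=0$. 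Since this is an identity of functions of $g$, it is preserved by taking the $M=(p,1,\ldots,1)^{\rm th}$ Fourier--Whittaker coefficient of each side, for any prime $p$.

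\emph{Extracting the coefficient.} First I would apply Proposition~\ref{MthEisCoeff} to both sides. The first-coefficient factor $A_{\mathcal P,\Phi}\big((1,\ldots,1),\cdot\big)$ factors as $\prod_{n_k\ne1}L^*(1,\Ad\phi_k)^{-1/2}$ times $\prod_{j<\ell}L^*(1+s_j-s_\ell,\phi_j\times\phi_\ell)^{-1}$. The Rankin--Selberg product is precisely the quantity cleared in passing from $E_{\mathcal P,\Phi}$ to $E^*_{\mathcal P,\Phi}$, while the adjoint product depends only on the forms $\phi_k$ and not on $s$; since both sides carry the same $\mathcal P$ and $\Phi$, it, together with the $n$-dependent normalizing constant, is identical on the two sides and cancels. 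What survives, with $M=(p,1,\ldots,1)$, is
$$\lambda_{\mathcal P,\Phi}\big((p,1,\ldots,1),s\big)\,W_{\alpha_{_{\mathcal P,\Phi}}(s)}(Mg)=\lambda_{\mathcal P,\Phi}\big((p,1,\ldots,1),\mu(s)\big)\,W_{\alpha_{_{\mathcal P,\Phi}}(\mu(s))}(Mg)$$
for all $g$.

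\emph{Cancelling the Whittaker functions.} This is the crux. Because $E^*_{\mathcal P,\Phi}(\cdot,s)$ and $E^*_{\mathcal P,\Phi}(\cdot,\mu(s))$ are literally the same function of $g$, they have identical eigenvalues under every $\GL(n,\R)$-invariant differential operator, so by the definition of the Langlands parameter (Definition~\ref{AutLang}) the tuples $\alpha_{_{\mathcal P,\Phi}}(s)$ and $\alpha_{_{\mathcal P,\Phi}}(\mu(s))$ agree as multisets, i.e.\ one is a permutation of the other. By the permutation-invariance of the completed Whittaker function (Definition~\ref{def:JacWhittFunction}) the two Whittaker factors above then coincide, and since this common function is not identically zero in $g$, I may divide it out to obtain
$$\lambda_{\mathcal P,\Phi}\big((p,1,\ldots,1),s\big)=\lambda_{\mathcal P,\Phi}\big((p,1,\ldots,1),\mu(s)\big).$$
Evaluating \eqref{HeckeCoeff} at $m=p$ collapses the divisor sum to $\sum_{i=1}^r\lambda_{\phi_i}(p)\,p^{s_i}$; grouping the indices $i$ according to the blocks of equal forms in \eqref{eq:UniqueFEiffDivSum} (so that $\lambda_{\phi_i}(p)=\lambda_j(p)$ for $i$ in the $j$-th block) rewrites the left side as $\sum_{j}\lambda_j(p)\sum_{i} p^{s_i}$ and the right side as $\sum_j\lambda_j(p)\sum_{i} p^{\mu_i(s)}$, which are exactly the two sides of \eqref{eq:DivisorSum}. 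As this holds for every prime $p$ and every $s$ on the hyperplane, the hypothesis assumed in the lemma forces $\mu=\sigma_1\times\cdots\times\sigma_N$ with $\sigma_j\in S_{r_j}$; by the Remark this block permutation is precisely the conclusion of Conjecture~\ref{conj:UniqueFE}.

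\emph{Main obstacle.} I expect the Whittaker cancellation to be the only delicate point, as it rests on two facts external to the bare computation: that coincidence of two completed Whittaker functions forces their Langlands parameters to agree as multisets (equivalently, that Whittaker functions attached to inequivalent parameters are linearly independent), and that the Whittaker function is not identically zero. Once these are granted, every other step is a formal manipulation of the explicit formulas in Propositions~\ref{MthEisCoeff} and~\ref{langlandsparamsforPhi}.
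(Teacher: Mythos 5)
Your proof is correct and follows the paper's skeleton---reduce to $\Phi$ of the form \eqref{eq:UniqueFEiffDivSum} with $\sigma'$ trivial, equate the $(p,1,\ldots,1)^{\rm th}$ Fourier--Whittaker coefficients via Proposition~\ref{MthEisCoeff}, cancel everything except the divisor sum to reach \eqref{eq:DivisorSum}, then apply the hypothesis---but the crux, removing the Whittaker factor, is handled by a genuinely different device. The paper evaluates the coefficient identity at $g=\diag(p^{-1},1,\ldots,1)$, so that the Whittaker portion becomes its value at the identity, independent of $p$, and then concludes that the divisor sum alone is invariant. You argue spectrally: the two Eisenstein series are the same nonzero function of $g$, hence have the same eigenvalues under all invariant differential operators, hence $\alpha_{\mathcal P,\Phi}(s)$ and $\alpha_{\mathcal P,\Phi}(\mu(s))$ agree as multisets; permutation invariance of $W$ then makes the two Whittaker factors identical, and nonvanishing (for generic $s$, which suffices by continuity) lets you divide them out. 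Your route requires two standard external inputs---that the eigenvalue character determines the Langlands parameter only up to the $S_n$-action (a Harish-Chandra-type fact, which is more than Definition~\ref{AutLang} literally asserts, so cite it as such) and that $W_\alpha\not\equiv 0$---but it buys the identity \eqref{eq:DivisorSum} exactly, with ratio $1$. The paper's substitution, read literally, yields only proportionality of the two divisor sums by the $p$-independent ratio of the two Whittaker values at the identity; pinning that ratio to $1$ needs a further observation (e.g.\ comparing the $M=(1,\ldots,1)$ coefficients, which is essentially your Whittaker-equality step), so your version is, if anything, the more airtight write-up. One small inconsistency to fix: your ``Main obstacle'' paragraph attributes the multiset statement to linear independence of Whittaker functions attached to inequivalent parameters, but your actual argument never uses that---it derives the multiset equality from the eigenvalues of the Eisenstein series itself, which is the cleaner mechanism and the one you should keep.
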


\begin{proof}
If $E^*_{\mathcal P,\Phi}(g,s) = E^*_{\mathcal P,\Phi}(g,\mu(s))$, then it must be true that for every $g$ the product of the divisor sum and the Whittaker function appearing in $FW_{\mathcal P,\Phi}(g,(p,1,\ldots,1),s)$ is invariant under $s\mapsto \mu(s)$.  Since we can choose $g=\diag(p^{-1},1,\ldots,1)$, for which the Whittaker function portion of $FW_{\mathcal P,\Phi}$ is independent of $p$, it must be the case that the divisor sum by itself is invariant.  Hence, it suffices to prove that the only possible affine transformations $\mu$ which preserve the divisor sum are in fact permutations $\sigma$ for which $\sigma\Phi=\Phi$.
\end{proof}

\vskip 4pt

\noindent
{\bf Proof of Proposition~\ref{prop:OneBlock}}. 
In this case, note that \eqref{eq:DivisorSum} simplifies to give
 \[ \lambda(p)\big(p^{s_1} + p^{s_2}+ \cdots + p^{s_r}\big) = \lambda(p)\big( p^{\mu_1(s)} + p^{\mu_2(s)} + \cdots p^{\mu_r(s)}\big). \]
We may assume that $p$ is a prime for which $\lambda(p)\neq 0$, hence
\begin{equation}\label{eq:noLambda}
 p^{s_1} + p^{s_2}+ \cdots + p^{s_r} = p^{\mu_1(s)} + p^{\mu_2(s)} + \cdots p^{\mu_r(s)}.
\end{equation}
By Lemma~\ref{lem:UniqueFEiffDivSum}, we just need to show that the only way this can possibly hold is if each term $\mu_j(s)$ is actually equal to $s_{\sigma(j)}$ for some permutation $\sigma\in S_r$.

To see that this is the case, first fix $s_2,\ldots,s_{r-1}$ and assume that $s_1\in \R$ with $s_1\to \infty$.  Then in order for the asymptotics of the left hand side of \eqref{eq:noLambda} to agree with those of the right hand side, it must be the case that $\mu_{j_1}(s) = s_1$ for some $j_1\in \{1,2,\ldots,r\}$.  This same argument gives, for each $i=1,2,\ldots,r-1$, that $s_i = \mu_{j_i}(s)$ for some $j_i\in \{1,2,\ldots,r\}$.  Similarly, we see that $\mu_{j_r}(s)=s_r$ by looking at the case that $s_1\to-\infty$ with $s_2,\ldots,s_{r-1}$ fixed.  Therefore, $\mu$ is given by the map $i\mapsto j_i$, which, by the pigeonhole principle, is a permutation. \hfill \qedsymbol

\vskip 6pt

\noindent
{\bf Proof of Proposition~\ref{prop:IfMuIsLinear}}.
We assume that $\Phi$ is as in Lemma~\ref{lem:UniqueFEiffDivSum} and that $\mu$ is linear, i.e., \eqref{eq:UniqueFE} holds with $b_1=b_2=\cdots=b_r=0$.

In order to simplify the proof, we set some notation.  Recall, first, that $\hat{r}_{i} = \sum\limits_{j=1}^i r_j$.  Then let
 \[ I_j := \big\{ i \in \Z \mid \hat{r}_{j-1} < i \leq \hat{r}_j \big\}. \]
For each $j=1,2,\ldots,N$, we set $\lambda_j:=\lambda_{\phi_j}$.  Then \eqref{eq:UniqueFEiffDivSum} is equivalent to
\begin{equation}\label{eq:DivisorSum2}
 \sum_{j=1}^N \lambda_j(p)\Bigg( \sum_{i\in I_j} p^{s_i}\Bigg) =  \sum_{j=1}^N \lambda_j(p)\Bigg( \sum_{i\in I_j} p^{\mu_i(s)}\Bigg).
\end{equation}

As in the proof of Proposition~\ref{prop:OneBlock}, we choose $p$ such that $\lambda_j(p)=\lambda_k(p)$ if and only if $j=k$.  Then, for a particular $i\in I_1$, consider the limit $s_i\to+\infty$ (where $s_j$ is fixed for $j\neq i,r$).  Then the left hand side of \eqref{eq:DivisorSum2} is asymptotic to $\lambda_1(p)p^{s_i}$.  Since we are assuming Conjecture~\ref{distinct}, to agree with the right hand side, it must be the case that $\mu_{j_i}(s)=s_i$ for some choice of $j_i\in I_1$.  This shows, again via the pigeonhole principle, that $\mu$ permutes the variables $\big\{s_1,\ldots, s_{r_1}\big\}$.

The same argument holds for $i\in I_k$ for $k=2,\ldots,r$ by considering $s_i\to +\infty$ with $s_j$ fixed for $j\neq i,1$.  Comparing the asymptotics of both sides of \eqref{eq:DivisorSum2}, we conclude that $\mu$ permutes the set $\big\{ s_i \mid i\in I_k\big\}$.  Combined with Lemma~\ref{lem:UniqueFEiffDivSum}, this completes the proof. \hfill \qedsymbol

\begin{remark}\rm
We observe that if one is in a case that $\mu$ is linear, i.e., that $\mu$ is as in \eqref{eq:UniqueFE} with the constants $b_i=0$ for each $i=1,2,\ldots,r$, then the representation of $\mu$ as a matrix is not unique.  Indeed, we can think of $\mu$ as an element in the image of the natural map $\psi: M_{r\times r}(\C) \to \Hom_\C( V,\C^r )$, where
 \[ V=\C^r/\{ (s_1,s_2,\ldots,s_r) \mid s_1+s_2+\cdots+s_r=0 \}. \]
Note that $\psi$ is clearly surjective, but its kernel contains the subspace 
 \[ W = \left \{ \left. \left[ \begin{smallmatrix} a_1 & a_1 & \cdots & a_1 \\ a_2 & a_2 & \cdots & a_2 \\ \vdots & \vdots & & \vdots \\ a_r & a_r & \cdots & a_r \end{smallmatrix}\right] \right| a_1,a_2,\ldots,a_r \in \C \right\}. \]
By a simple dimension counting argument, in fact, we see that $\ker(\psi)=W$.
\end{remark}






\end{document}